\newtheorem{thm}{Theorem}[section]
\newtheorem{prop}[thm]{Proposition}
\newtheorem{lem}[thm]{Lemma}
\newtheorem{rem}[thm]{Remark}
\newcommand{\R}{\mathbb{R}}
\newcommand{\Ueps}{U^{\varepsilon}}
\newcommand{\Weps}{W^{\varepsilon}}
\newcommand{\Phixi}{\Phi_{\xi}}
\newenvironment{rlcases}
  {\left \lbrace \begin{aligned}}
  {\end{aligned}\right\rbrace}
\DeclareMathOperator\sgn{sign}
\DeclareMathOperator\erf{erf}
\DeclareMathOperator\e{e}
\title{Parabolic free boundary price formation models  under market size fluctuations}
\author{Peter Markowich} 
\address{4700 King Abdullah University of Science and Technology, Thuwal 23955-6900, Kingdom of Saudi Arabia} 
\author{Josef Teichmann}
\address{Department of Mathematics, ETH Z\"urich, R\"amistr. 101, 8092 Z\"urich, Switzerland} 
\author{Marie-Therese Wolfram} 
\address{Mathematics Institute, University of Warwick, Coventry CV4 7TL, UK and Radon Institute for Computational and Applied Mathematics, Austrian Academy of Sciences, Altenbergerstr. 69, 4040 Linz, Austria. m.wolfram@warwick.ac.uk}
\begin{document}
\maketitle

\begin{abstract}
In this paper we propose an extension of the Lasry-Lions price formation model which includes fluctuations of the numbers of buyers and vendors. We analyze the model in the case of deterministic and stochastic market size fluctuations and present results on the long time asymptotic behavior and numerical evidence and conjectures on periodic, almost periodic and stochastic fluctuations. The numerical simulations  extend the theoretical statements and give further insights into price formation dynamics.
\end{abstract}



\pagestyle{myheadings}
\thispagestyle{plain}
\markboth{P.A. MARKOWICH, J. TEICHMANN AND M.T. WOLFRAM}{FB PRICE FORMATION MODELS WITH MARKET SIZE FLUCTUATIONS}

\section{Introduction}

This paper studies the impact of buyer and vendor number fluctuations on the price dynamics in an economic market in which a single good is traded. 
In general the formation of the price is a complex process, which results from the interplay of various additional factors such as trading behavior and rules, transaction costs, etc. Understanding these complicated interactions is a central research question in the  field of market microstructure, which aims to explain ``the process and outcome of exchanging assets under a specific set of rules'' (see for example O'Hara \cite{OHara}). \\
\noindent In 2007 Lasry and Lions introduced a minimalistic mean-field model (a one-dimensional parabolic free-boundary partial differential equation) to describe the price, which enters as the free boundary, of a single good traded between a large group of vendors and a large group of buyers. \\
\noindent Here we study a dynamic trading model, which is motivated by the price formation model proposed by Lasry and Lions. The original model considers a large group of buyers and vendors trading a single good. Each buyer or vendor acts continuously in time with his/her pre-trade price. If no trade takes place, the price diffuses instantaneously into its immediate neighborhood. When (through the diffusion around the pre-trade price) a buyer and vendor transact at this price, trading takes place. The buyer becomes a vendor, at a price which is the trading price plus the transaction cost, and the vendor becomes a buyer at a price which is the trading price minus the transaction cost. The total number of buyers as well as the total number of vendors is conserved in time. In reality however, buyers may not want to resell immediately or sellers may decide to wait with their next purchase. As a result temporal fluctuations in the number of buyers and vendors arise naturally and should be therefore included in the model. It is natural to expect that these fluctuations have independent increments and depend linearly on the total number of buyers and vendors.\\
\noindent There is a plethora of stochastic models, often based on queuing theory, for price formation (see for instance Lachapelle et al. \cite{laclaslehlio:13} and the references therein), which deal in different degrees of accuracy with the many known phenomena of price formation on a microlevel. The model, which we present here, fulfills this requirement only in a very schematic conceptual sense. It stands out, however, due to its analytic tractability, a feature badly missing in most of the other models. We believe that the stochastic version of the Lasry and Lions model can serve as a building block for other price formation models with more realistic description of the relaxation and fluctuation mechanisms. 
\\
The Lasry and Lions model has been studied in a series of papers (see \cite{CGGK2009, MMPW2009, GG2009, CMP2011, CMW2011}). Burger et al.~ \cite{BCMW2013, BCMW2014} identified the Lasry and Lions model as the asymptotic limit of a kinetic model, in which trading events between buyers and vendors are described by  collisions, giving Boltzmann-type equations for the buyer and vendor densities. The Lasry and Lions model corresponds to the high frequency trading regime, that is markets with  high volumes of trade and short holding periods. The short holding periods justify the mass conservation property of both models. Kinetic models were also proposed for various applications in finance, such as wealth and income distribution by D\"uring and Toscani, Toscani, Brugna and Demichelis as well as Pareschi and Toscani in \cite{DT2007, TBD2013,PT2013, PT2014}, knowledge growth in a society by Burger, Lorz and Wolfram  \cite{BLM2015} or opinion formation by Toscani \cite{T2006}. Degond, Liu and Ringhofer ~\cite{DLR2014, DLR2014-2} also proposed a kinetic mean-field approach to describe wealth evolution.\\
\noindent Here we generalize the Lasry and Lions model to account for fluctuations in the number of buyers and vendors. The fluctuation are modeled either deterministically or stochastically, and we aim to study their influence on the dynamics of the price. We discuss the long time behavior in either case as well as confirm and extend the analytic findings by several numerical simulations.\\

\noindent The paper is organized as follows: The parabolic price formation model of Lasry and Lions is introduced in Section \ref{s:ll}.
In Section \ref{s:deterministic} we present our generalization, which includes deterministic fluctuations in the masses of buyers and vendors. 
We discuss the large time asymptotics and periodic fluctuations in Section \ref{s:asymptotics}. In Section \ref{s:numsim} we illustrate the price dynamics with numerical simulations. In Section \ref{s:stochastic} we conclude by studying the price formation process in the case of stochastic mass fluctuations.

\section{The Lasry and Lions price formation model}\label{s:ll}

\noindent Lasry and Lions \cite{LL2007} consider a market in which a large group of buyers and a large group of vendors trade a certain good.  If a buyer and a seller agree on a price, a transaction takes place and the buyer immediately becomes a seller and vice versa. A transaction fee $a$ has to be paid by both parties. Buyers and vendors are modeled by densities, which describe the (absolute) number of buyers (vendors) over the price variable $x$. Price formation is modeled by buying at price $ x=p(t) $, paying -- due to a transaction cost $a$ -- the price $ p(t) + a $ and adding to the vendor distribution at $ p(t) + a $. The same dynamics hold in the case of selling (again with the transaction cost $a$). Non trivial dynamics are introduced by allowing for diffusive changes of the buyer and seller distributions between two events of price formation. The numbers of buyers and vendors remain unchanged at each time of the price formation process.\\
The distribution of the agents is described in terms of a signed density function $f$. In particular, the distribution of buyers over the price $x \in \R$ is the positive part $f$, that is, $f^+ = \max(f,0)$, while the negative part $f^- = -\min(f,0)$ is the distribution of vendors again over the price $x \in \R$. The distribution of buyers is supported to the left of the formed price $ p(t) $, whereas the distribution of vendors is supported to the right of $p(t)$ reflecting the old principle ``buy low and sell high''. The number of transactions at time $t$ is given by $\lambda = \lambda(t)=- k f_x(p(t),t)$. The impact of the trading events take place at the agreed price shifted by the transaction cost $a > 0$. The diffusive changes in the buyer and vendor distributions are modeled by second order derivatives with constant diffusivity $k = \frac{\sigma^2}{2}$. \\ 
\noindent All the above are expressed by the following free boundary problem: 
\begin{subequations}\label{e:lasrylions}
\begin{align}
&f_t = k f_{xx} + \lambda(t) (\delta(x-p(t)+a)-\delta(x-p(t)-a))  \text{ in } \mathbb{R} \times \mathbb{R}_+\label{e:llheat}\\
&\lambda(t) = - k f_x(p(t),t) \text{ and } f(p(t),t) = 0 \text{ in } \mathbb{R}_+\\
&f(\cdot,0) = f_I \text{ and } p(0) = 0.
\end{align}
\end{subequations}
Note that, without loss of generality, $p(0) = 0$. since it can always be achieved by a translation. We assume that the initial distribution of buyers and vendors $ f_I$ satisfies
\begin{align}\label{a:finit}
\begin{split}
&f_I \in L^{\infty}(\mathbb{R}) \cap L^1(\mathbb{R}),~xf_I \in L^1(\R) \\
&f_I \geq 0 \text{ in } (-\infty,0), ~ f_I(0) = 0 \text{ and } f_I(x) \leq 0 \text{ in } (0,+\infty).
\end{split}\tag{A1}
\end{align}
\noindent In the original model \eqref{e:lasrylions} the numbers of buyers $M^l$ and vendors $M^r$ are conserved in time and are given by
\begin{align} \label{e:masses}
M^l = \int_{\mathbb{R}} f^+_I(x)\, dx \text{ and } M^r = \int_{\mathbb{R}} f^-_I(x)\, dx.
\end{align}
The ratio of these ``masses'' determines the large time asymptotic behavior of the price, that is, if $M^l \neq M^r$, then (cf. \cite{CMP2011})
\begin{align}\label{e:asymp}
  p(t) \sim~ \sqrt{t} q_\infty \text{ with } \erf(q_\infty) =  \sqrt{4k} \frac{M^l - M^r}{M^r+M^l},
\end{align}
where we use the usual definition for the error function $\erf(z) := \frac{2}{\sqrt{\pi}}\int_0^z e^{-r^2} dr$.\\
As a result the price increases asymptotically like $\sqrt{t}$ if $M_l > M_r$ (number of buyers is larger than number of vendors) and decreases asymptotically like $-\sqrt{t}$ if $M_r > M_l$ (number of buyers smaller than the number of vendors). When $M_r = M_l$, the price stabilizes to a constant.\\

\noindent A very useful tool for the analysis and the numerics of the Lasry and Lions model is the following nonlinear transformation (introduced in \cite{CMP2011}), which changes \eqref{e:llheat} to the heat equation.
Define
\begin{align}\label{e:trans}
F(x,t) := \begin{cases}
 \hspace*{-1em}&\phantom{-}\sum_{n=0}^\infty f^+(x+na,t) \quad \text{if } x < p(t),\\ 
 \hspace*{-1em}&-\sum_{n =0}^\infty f^-(x-na,t) \quad \text{if } x > p(t).
\end{cases}
\end{align}
It follows that $F = F(x,t)$ satisfies the heat equation
\begin{align}\label{e:Fheat}
F_t = k F_{xx} \quad \text{ in } \mathbb{R}\times \mathbb{R}_+,
\end{align} 
with the (transformed) initial datum
\begin{align}\label{e:transfi}
F_I(x) = \begin{cases}
 \hspace*{-1em}&\phantom{-}\sum_{n=0}^\infty f_I^+(x+na) \quad \text{if } x < 0,\\
 \hspace*{-1em}&-\sum_{n=0}^\infty f_I^-(x-na) \quad \text{if } x > 0.
\end{cases}
\end{align}
Conversely, if $F$ is a solution to \eqref{e:Fheat}, then, the density $f$ in \eqref{e:lasrylions}, is recovered by 
\begin{align}\label{e:invF}
f(x,t) = F(x,t) - F^+(x+a,t) + F^-(x-a,t).
\end{align}
Note that to obtain \eqref{e:invF} it is necessary to ensure that the back-transformed function $f$ is positive to the left of the free boundary and negative to the right of it. This follows easily from the
structure of the transformed initial datum $F_I$ and the comparison principle for the heat equation.

\section{Deterministic market size fluctuations}\label{s:deterministic}
\noindent We introduce deterministic time dependent changes (fluctuations) of the size of the market in the Lasry and Lions model. 
This setting is the first step towards the development of a proper stochastic formulation, where mass fluctuations are  modeled by typical trajectories of semi-martingales. \\
Let $b_l = b_l(t)$ and $b_r = b_r(t)$ denote fluctuations for the buyer and vendor densities and assume that\\
\begin{enumerate}[label=(A\arabic*), start=2]
\item \label{a:alar} $b_l$, $b_r$ be in $C^{0,1}[0,\infty)$ with $b_l(0) = b_r(0) = 0$.\\
\end{enumerate}
\noindent Note that $b_l(0) = b_r(0) = 0$ can always be achieved by a constant shift $b_l$, $b_r$, which will not change the PDE model stated below.\\

\noindent Market size fluctuations are introduced conceptually by the following time splitting scheme, which corresponds to a numerical solution algorithm of a PDE: \vspace*{0.25em}\\
\begin{enumerate}[leftmargin=*]
\item Given a signed buyer-vendor distribution $f = f(x,t)$ at time $t$, solve the Lasry and Lions model \eqref{e:lasrylions} from time $t$ to $t+\Delta t$.
\item At the end of each time step modify the market by multiplying the distribution of buyers $f^+$ and vendors $f^-$ by $e^{b_l(t+\Delta t)-b_l(t)}$ and $e^{b_r(t+\Delta t)-b_r(t)}$ respectively. 
\end{enumerate}
\vspace*{0.5em}
This splitting scheme yields in the (formal) limit $\Delta t \rightarrow 0$, the problem
\begin{subequations}\label{e:ll2}
\begin{align}
f_t = k f_{xx} + \lambda(t)&(\delta(x-p(t)+a)-\delta(x-p(t)-a))\label{e:ll2equ}\\
+& (\dot{b}_l f^+ - \dot{b}_r f^-) \qquad \qquad \qquad \qquad ~ \text{ in } \mathbb{R} \times \mathbb{R}_+.\nonumber\\
\lambda(t) = - k f_x(p(t),t)&, ~f(p(t),t) = 0 \qquad \qquad \qquad \qquad \text{ in } \mathbb{R}_+\label{e:ll2lambda}\\
f(\cdot,0) = f_I \text{ and } &p(0) = p_0.
\end{align} 
\end{subequations}
Assuming that the free boundary $p = p(t)$ is uniquely defined by \eqref{e:ll2lambda}, we can compute the actual fluctuations of the numbers of buyers and vendors by integrating \eqref{e:ll2equ} over $(-\infty,p(t))$ and $(p(t),\infty)$, respectively and taking into account that $f > 0$ for $x < p(t)$ and $f < 0$ for $x > p(t)$. \\
Let $M^l(t) := \int_{\R} f^+(x,t) dx $ and $M^r(t) := \int_{\R} f^-(x,t)dx$. Using the definition of $\lambda$ in \eqref{e:ll2lambda}, we obtain
\begin{align*}
\frac{d}{dt} M^l(t) = \dot{b}_l(t) M^l(t),~~ \frac{d}{dt} M^r(t) = \dot{b}_r(t) M^r(t),
\end{align*}
and, therefore, for $ t \geq 0$,
\begin{align*}
 M^l(t) = e^{b_l(t)} M^l~~ \text{ and } ~~ M_r(t) = e^{b_r(t)} M^r .
\end{align*}
 Hence the proposed time splitting results in exponential market size changes for the buyers and vendors.

\noindent The rigorous justification of the convergence of the splitting scheme to \eqref{e:ll2} follows easily from the reformulation of the problem as the heat equation in each time interval $(t,t+\Delta t)$ after multiplying the initial datum at $t$ from the left and right by the appropriate exponential. We leave the details to the reader.\\
The results here are based on the transformation
connecting the Lasry and Lions model \eqref{e:lasrylions} in a one-to-one way to the heat equation \eqref{e:Fheat}. Since market size fluctuations are homogeneous in space the transformation
yields 
\begin{subequations} \label{e:F}
\begin{align}
  &F_t = k F_{xx} + \dot{b}_l F^+ -\dot{b}_r F^- ~\quad \text{ in } \mathbb{R} \times \mathbb{R}_+ \\
  &F(\cdot,0) = F_I \qquad \qquad \qquad \qquad \text{ in } \mathbb{R},
\end{align}
\end{subequations}
where the initial datum is given by \eqref{e:transfi}; note that $p = p(t)$ is the zero level set of the function $F$. We can rewrite equation \eqref{e:F}, using  $F^+ = \frac{F + \lvert F \rvert}{2}$ and $F^- = \frac{\lvert F \rvert - F}{2}$, as
\begin{align}\label{e:lldet}
F_t &= kF_{xx} + \dot{b}_l F^+ - \dot{b}_r F^- \\
&= k F_{xx} + \frac{1}{2}(\dot{b}_l + \dot{b}_r) F + \frac{1}{2}(\dot{b}_l- \dot{b}_r) \lvert F  \rvert. \nonumber
\end{align}
Then the exponential transformation $U =  F e^{-(b_l+b_r)/2}$ gives
\begin{subequations}\label{e:U}
\begin{align}
&U_t = k U_{xx} + \frac{1}{2}\left(\dot{b}_l - \dot{b}_r\right) \lvert U \rvert \quad \text{ in } \mathbb{R} \times \mathbb{R}_+ \label{e:U1}\\
&U(\cdot,0) = F_I \qquad \qquad \qquad \qquad \text{ in } \mathbb{R}.
\end{align}
\end{subequations}
We now state the following existence and uniqueness result for \eqref{e:F}.
\begin{thm}\label{t:exist}
Assume \eqref{a:finit} and \ref{a:alar}. Then the initial value problem \eqref{e:F} has a unique global-in-time classical solution and the
free boundary $p = p(t)$  is the graph of a locally bounded continuous function of time.
\end{thm}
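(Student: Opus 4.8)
The plan is to solve the equivalent problem \eqref{e:U} for $U = F e^{-(b_l+b_r)/2}$ and then transform back. Since $b_l,b_r$ depend only on time and are Lipschitz by \ref{a:alar}, the factor $e^{(b_l+b_r)/2}$ is a positive $C^1$ function of $t$ alone, so \eqref{e:F} and \eqref{e:U} are equivalent and their solutions have the same spatial regularity; it therefore suffices to treat \eqref{e:U}. Writing $G(x,t)=(4\pi k t)^{-1/2}e^{-x^2/4kt}$ for the heat kernel, I would recast \eqref{e:U} as the mild (Duhamel) formulation
\begin{align*}
U(x,t)=\int_{\R}G(x-y,t)F_I(y)\,dy+\tfrac12\int_0^t\!\!\int_{\R}G(x-y,t-s)\big(\dot b_l(s)-\dot b_r(s)\big)\,|U(y,s)|\,dy\,ds .
\end{align*}
The two features that make this tractable are that the nonlinearity $u\mapsto|u|$ is globally Lipschitz with constant $1$, and that $\dot b_l-\dot b_r\in L^\infty_{loc}[0,\infty)$.

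First I would establish local existence and uniqueness by the Banach fixed point theorem in $X_T=C([0,T];L^\infty(\R))$. The free term lies in $X_T$ (it is bounded, tending to strictly positive and strictly negative constants as $x\to-\infty$ and $x\to+\infty$), and using $\int_{\R}G(x-y,t)\,dy=1$ together with $\big||u|-|v|\big|\le|u-v|$ one gets, for the Duhamel map $\mathcal T$,
\begin{align*}
\|\mathcal TU-\mathcal TV\|_{X_T}\le \tfrac12\,\|\dot b_l-\dot b_r\|_{L^\infty(0,T)}\,T\,\|U-V\|_{X_T},
\end{align*}
a contraction for $T$ small. Crucially the admissible step depends only on $\|\dot b_l-\dot b_r\|_{L^\infty}$ and not on the size of the data, so global existence is immediate: the Gronwall bound $\|U(\cdot,t)\|_\infty\le\|F_I\|_\infty\exp\!\big(\tfrac12\int_0^t|\dot b_l-\dot b_r|\,ds\big)$ rules out finite-time blow-up, and the solution is continued over each $[0,T]$ in finitely many uniform steps, hence over all of $[0,\infty)$. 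Uniqueness holds within the class of bounded solutions, to which the constructed classical solution belongs (a classical bounded solution satisfies Duhamel's formula and so coincides with the mild one). Parabolic smoothing of $G$ then upgrades the mild solution: $U(\cdot,t)$ is $C^\infty$, in particular $C^2$, in $x$ for $t>0$ with bounded derivatives on $[\tau,T]\times\R$, and the equation holds pointwise for a.e.\ $t$, the time regularity being limited only by $\dot b_l,\dot b_r\in L^\infty$. Multiplying by the positive time-factor yields the asserted classical solution $F$ of \eqref{e:F}.

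It remains to analyse the free boundary, defined as the zero level set of $F$, equivalently of $U$ since the exponential factor is strictly positive. I would first record that $U(\cdot,t)$ inherits from $F_I$ strictly positive, respectively negative, limits as $x\to-\infty$, respectively $x\to+\infty$, uniformly on each $[0,T]$, so both signs are present and all zeros stay in a bounded $x$-region. Writing $|U|=\sgn(U)\,U$, the solution satisfies the \emph{linear} parabolic equation $U_t=kU_{xx}+c(x,t)U$ with bounded coefficient $c=\tfrac12(\dot b_l-\dot b_r)\sgn(U)\in L^\infty$. The key tool is Angenent's zero-number theorem, which applies since here the leading coefficient $k$ is constant, the drift vanishes, and $c\in L^\infty$: the number of sign changes of $U(\cdot,t)$ is finite and non-increasing in $t$. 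As $F_I$ changes sign exactly once and both signs persist, this number is identically one, giving for each $t$ a unique zero $p(t)$. The zero must moreover be simple, $U_x(p(t),t)\ne0$, since a degenerate zero would force the count to drop below one; the implicit function theorem then makes $p$ continuous, indeed $C^1$, on $(0,\infty)$, and continuity up to $t=0$ follows from the continuity of $U$ at $t=0$ together with non-degeneracy of the initial zero. Hence $p\in C([0,\infty))$ and is, in particular, locally bounded, though generically unbounded as $t\to\infty$, in agreement with the $\sqrt t$ asymptotics \eqref{e:asymp}.

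The routine part is the fixed point argument, which goes through verbatim because $|\cdot|$ is globally Lipschitz. The main obstacle I anticipate is the free boundary step, namely verifying that $U$ has a single simple zero for every $t$: this requires the zero-number theorem with only a bounded, discontinuous coefficient $c$ and on the whole line, so one must localize to large spatial intervals on which the nonzero limits of $U$ at $\pm\infty$ prevent zeros from escaping. Carrying this out carefully, rather than the existence theory, is where the real work lies.
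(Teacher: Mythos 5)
Your overall strategy coincides with the paper's: reduce \eqref{e:F} to \eqref{e:U}, obtain global existence and uniqueness from the globally Lipschitz nonlinearity $u \mapsto |u|$, and control the zero set by writing $|U| = \sgn(U)\,U$, so that $U$ solves a linear equation $u_t = k u_{xx} + qu$ with $q = \tfrac12(\dot b_l - \dot b_r)\sgn U \in L^\infty$, to which Angenent's zero-number results apply. The fixed-point details you supply are fine (the paper simply cites Lipschitz continuity), and your simple-zero/implicit-function-theorem remark even yields slightly more ($C^1$ regularity of $p$) than the theorem asserts.

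There is, however, a genuine gap at exactly the step you flag as ``the real work'': the far-field sign control. First, your claim that $U(\cdot,t)$ ``inherits from $F_I$ strictly positive, respectively negative, limits as $x\to-\infty$, respectively $x\to+\infty$'' is false as stated: $F_I$ in \eqref{e:transfi} is a sum of translates of $f_I^{\pm}$ and in general has no limit at infinity (take $f_I^+$ the indicator of a short interval; then $F_I$ oscillates between $0$ and $1$ all the way to $-\infty$). Only local averages converge, so the correct statement --- and the one the paper proves --- is that for $t \in (\delta,T]$ the solution remains bounded away from $0$ (with the appropriate sign) in each far field. Second, even this weaker property is not automatic ``inheritance'': the source term $\tfrac12(\dot b_l-\dot b_r)|U|$ could a priori interfere with the sign structure. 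The paper establishes it by splitting $U = U_1 - U_2$ with $U_1, U_2 \geq 0$ solving the \emph{linear} equation with data $F_I^{+}$, $F_I^{-}$, sandwiching them via the comparison principle between $e^{A_T t}$ and $e^{B_T t}$ multiples of the pure heat evolutions $R$, $S$ of $F_I^{\pm}$, and reading off the far-field behavior from the Gaussian representation. Without this (or an equivalent) argument your zero-count reasoning has no anchor: Angenent's theorem must be localized to large intervals on whose endpoints $U$ does not vanish, and you never establish that such intervals exist for all $t$ in a compact time interval. A second, smaller omission: a sign-change count of one does not by itself exclude that the zero set of $U(\cdot,t)$ contains an interval (``fat parts''); the paper rules this out by Angenent's Lemma 5.1, applicable because $|U|/U$ is bounded.
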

\begin{proof}
From the Lipschitz continuity of the absolute value function we can deduce the existence of a unique solution of \eqref{e:U}. The free boundary $p$ corresponds to the zero level set of the function $U$, that is ~$U(p(t),t) = 0$. It follows from Angenent \cite{Angenent88} that the function $U(\cdot,t)$ has at most one zero $p(t)$ for every $t>0$, see also \cite{G2004}. In the former reference Angenent  
showed that the number of zeros of any solution $u$ of
\begin{align}\label{e:u1}
u_t = k u_{xx} + q~ u, \quad \text{ in } \in\mathbb{R} \times \mathbb{R}_+,
\end{align} with $q = q(x,t) \in L^{\infty}$ cannot increase in time. Since $\frac{\lvert U\rvert}{U}$ is bounded, Lemma 5.1 in \cite{Angenent88} guarantees that solutions to \eqref{e:U} cannot vanish identically on any interval $(x_0, x_1)$. It is, therefore, immediate that the free boundary has no ``fat'' parts.\\
\noindent Next we show that the free boundary cannot become unbounded in finite time.
Let $b(t) := \frac{1}{2} (\dot{b}_l(t) - \dot{b}_r(t))$, write \eqref{e:U1} in the form \eqref{e:u1} with $q(x,t) = b(t) \sgn U(x,t)$. 
We write $U= U_1-U_2$ where $U_1$ and $U_2$ are respectively the solutions of
\begin{align*}
\begin{aligned}
&\frac{\partial}{\partial t} U_1 = k \frac{\partial^2}{\partial x^2} U_1 + q U_1 \qquad \text{ in } \mathbb{R} \times \mathbb{R}_+, \\
&U_1(\cdot,0) = F_I^+ \qquad \qquad \qquad \text{ in } \mathbb{R}, 
\end{aligned}
\end{align*}
and 
\begin{align*}
\begin{aligned}
&\frac{\partial}{\partial t} U_2 = k \frac{\partial^2}{\partial x^2} U_2 + q U_2\qquad \text{ in } \mathbb{R} \times \mathbb{R}_+,\\
 &U_2(\cdot,0) = F_I^-\qquad \qquad \qquad \text{ in } \mathbb{R}.
\end{aligned}
\end{align*}
 For any given $T>0$ set $A_T:=\inf_{x \in \mathbb{R},~ t \in (0,T)} q(x,t)$, $B_T := \sup_{x\in \mathbb{R},~t \in (0,T)} q(x,t)$.
Since $U_1, U_2 \geq 0$, the comparison principle gives
\begin{align*}
e^{A_T t} R(x,t) \leq U_1(x,t) \leq e^{B_T t} R(x,t)~~\text{ and }~~e^{A_t t} S(x,t) \leq U_2(x,t) \leq e^{B_T t} S(x,t),
\end{align*}
where $R$ and $S$ are solution of the initial value problems
\begin{align*}
\begin{aligned}
&R_t = k R_{xx} \quad \text{ in } \mathbb{R} \times \mathbb{R}_+ \\
&R(\cdot,0) = F_I^+ ~\text{ in } \mathbb{R} 
\end{aligned}
\quad \text{ and } \quad
\begin{aligned}
&S_t = k S_{xx}\quad \text{ in } \mathbb{R} \times \mathbb{R}_ü\\
&S(\cdot,0) = F_I^- \text{ in } \mathbb{R}.
\end{aligned}
\end{align*}
Expressing $R$ and $S$ in terms of the fundamental solution of the heat equation yields that, for any $\delta > 0$, all $t \in (\delta, T]$, we have
\begin{align*}
&R(x,t) \rightarrow 0 \text{ as } x \rightarrow \infty \text{ and } S(x,t) \rightarrow 0 \text{ as } x \rightarrow -\infty,
\end{align*}
while $R(\cdot,t)$ and $S(\cdot,t)$ remain bounded away from $0$ as $x \rightarrow -\infty$ and $x \rightarrow \infty$ respectively see \cite{CMP2011} for full details. We conclude that the signs of $U$ in the far fields are determined by the signs of the initial datum $U_I$, which is positive on the left and negative on the right. The continuity of $U$ implies the existence of a zero in between and the free boundary $p$ stays locally bounded in time.
\end{proof}

\noindent It is also of interest to consider market size fluctuation functions with jump discontinuities $b_l$ and $b_r$, particularly with Levy-type random processes in hindsight. Assume for the
moment that, instead of \ref{a:alar}, we have:\\

\begin{enumerate}[label=(A\arabic*'), start=2]
\item There is an increasing sequence $\lbrace T_k \rbrace_{k \in \mathbb{N}},~ T_0 = 0$, $T_0^- := T_0^+ := 0$, $T_k \rightarrow \infty$ as $k \rightarrow \infty$ such that, for all $k \in \mathbb{N}$, $b_l,~b_r \in W^{1,\infty}(T_k, T_{k+1})$ with $b_l(0) = b_r(0) = 0$.\label{a:piececonst}\\
\end{enumerate}

\noindent We are interested in the case where either $b_r (T_k^+)$  or $b_l(T_k^+)$  is different from $b_r (T_k^-)$ or $b_l(T_k^-)$ respectively.
Then the limit $\Delta t \rightarrow 0$ of the splitting scheme has to be modified even on the formal level as follows:\\
On every interval $(T_k,T_{k+1})$ we solve \eqref{e:ll2} subject to the initial datum
\begin{align*}
f_I(x,T_k^+) = f_I(x,T_k^-) 
\begin{cases}
\exp(b_l(T_k^+)-b_l(T_k^-)),\quad \text{ if } x < p(T_k)\\
\exp(b_r(T_k^+)-b_r(T_k^-)),\quad \text{ if } x > p(T_k).
\end{cases}
\end{align*}
Note that Theorem \ref{t:exist} applies on each interval $(T_k,T_{k+1})$ and that the free boundary $p = p(t)$ is globally continuous.

\noindent Standard results for the (obvious) time splitting scheme of semilinear parabolic equations with smooth nonlinearities (see \cite{F2009}) yield the convergence of the scheme. 
It is immediate that the convergence is retained for less smooth nonlinearities (albeit possibly loosing the convergence order). Applying first the 
exponential transformation, which gives the splitting scheme for \eqref{e:F} and equivalently for \eqref{e:ll2}, and, consecutively, the transformation \eqref{e:invF}, we conclude the convergence of the solution of the time spitting scheme.

\section{The long time asymptotics}\label{s:asymptotics}

\noindent We now consider the asymptotic behavior of \eqref{e:ll2} for $t \rightarrow \infty$. We use the super- and sub-scripts $l$ and $r$ to refer to the left and right sides of the free boundary $p = p(t)$ in the following.\\
In what follows we assume that \\
\begin{enumerate}[label=(A\arabic*),ref=A\arabic*,start=3,]
\item\label{a:fluctuations} 
\begin{enumerate}[label=(\roman*),ref=(\theenumi)(\roman*)]
\item Let $b(t) := \frac{1}{2} (b_l(t) - b_r(t)) \in W^{1,\infty}(0,\infty) \cap W^{1,1}(0,\infty)$, \label{a:flucspaces}
\item  $\lim_{t\rightarrow \infty} b_l(t) = b_l^\infty$ and $ \lim_{t\rightarrow \infty} b_r(t) = b_r^\infty,$ \label{a:flucbound}
\item $\lvert b_l(t)-b_l^{\infty} \rvert + \lvert b_r(t)-b_r^{\infty} \rvert = \mathcal{O}(t^{-w})$ for all $t>0$  with $w > 1$, \\
or alternatively we shall use \label{a:flucdec1}
\item$\lvert b_l(t)-b_l^{\infty} \rvert + \lvert b_r(t)-b_r^{\infty} \rvert = \mathcal{O}((1+t)^{-w})$ for all $t>0$  with $w > \frac{3}{2}$. \label{a:flucdec2}\\
\end{enumerate}
\end{enumerate}

\noindent To simplify the argument, we write
\begin{align*}
e^{b_l(t)} := l(t) \text{ and } ~e^{b_r(t)} := r(t).
\end{align*}
Next we consider the exponential transformation of the original problem \eqref{e:lldet} given by 
\begin{align} \label{e:exptrans}
G(x,t) := 
\begin{cases}
e^{-b_l(t)} F(x,t)~\text{ if } x < p(t)\\
e^{-b_r(t)} F(x,t)~\text{ if }  x > p(t).
\end{cases}
\end{align}
It is immediate that $G$ satisfies the following problem
\begin{subequations}\label{e:G}
\begin{align}
  G_t &= k G_{xx}\qquad \qquad ~ \text{ in } \lbrace (x,t) \mid t > 0 \text{ and } x \neq p(t) \rbrace\\
  G(p(t),t) &= 0\phantom{G_{xx} } \qquad \qquad  ~ \text{ in }  \mathbb{R}_+,\\
l(t) G_x \mid_{x = p(t)^-} &= r(t) G_x \mid_{x = p^(t)^+} \text{ in } \mathbb{R}_+\\
G(\cdot,0) &= G_I = F_I.
\end{align}
\end{subequations}
To determine the asymptotic behavior in case of deterministic market fluctuations we use the parabolic rescaling and define
$x = \frac{y}{\varepsilon} ~\text{ and } t = \frac{\tau}{\varepsilon^2}.$
Then $G(y,\tau) := G(\frac{y}{\varepsilon}, \frac{\tau}{\varepsilon^2})$ satisfies
\begin{subequations}\label{e:H}
\begin{align}
  G_{\tau} &=k G_{yy} \qquad~ \qquad  \text{ in } \lbrace (x,\tau) \mid \tau > 0 \text{ and } x \neq q^{\varepsilon}(\tau) \rbrace\\
G^{\varepsilon}(q^{\varepsilon}(\tau),\tau) &= 0 \phantom{G_{yy}(y,\tau),} \qquad \text{ in } \mathbb{R}_+,\\
l(\frac{\tau}{\varepsilon^2}) G_y\mid_{y = q^{\varepsilon}(\tau)^-} &= r(\frac{\tau}{\varepsilon^2})G_y \mid_{y = q^{\varepsilon}(t)^+},\\
G(\cdot,0) &= F_I^{\varepsilon}.
\end{align}
\end{subequations}
We study the limit as $\varepsilon \rightarrow 0$ and note that $q^{\varepsilon(\tau)} = \varepsilon p(\frac{\tau}{\varepsilon^2})$. At first we assume that $q^{\varepsilon} = q^{\varepsilon}(\tau)$ is known (and smooth) and solve
\eqref{e:H} on the left and the right of the free boundary respectively. Let $G^{l}$ and $G^{r}$ be the smooth solutions of
\begin{align*}
\begin{aligned}
&  G_{\tau}^{l} =k G_{yy}^{l} \quad \text{ in } (-\infty,q^{\varepsilon}(\tau)),~\tau > 0\\
&G^{l}(q^{\varepsilon}(\tau),\tau) = 0  \quad \text{ in } \mathbb{R}_+,\\
&G^{l}(\cdot,0) = F_I^{\varepsilon} \quad \text{ in } (-\infty, q^{\varepsilon}(0))
\end{aligned}
\quad \text{ and } \quad
\begin{aligned}
&  G_{\tau}^{r} =k G_{yy}^{r} \quad~  \text{ in } (q^{\varepsilon}(\tau),\infty),~\tau > 0\\
&G^{r}(q^{\varepsilon}(\tau),\tau) = 0  \quad \text{ in } \mathbb{R}_+,\\
&G^{r}(\cdot,0) = F_I^{\varepsilon}\quad \text{ in } (q^{\varepsilon}(0), \infty).
\end{aligned}
\end{align*}
Then $G^{l}$ and $G^{r}$ can be written as
\begin{align*}
G^{l} (y,\tau) &= \int_{-\infty}^{q^{\varepsilon}(0)}  F_I^{\varepsilon} (\xi) K(y-\xi,\tau) d\xi  \\
&+ \int_0^{\tau} \e^{-b_l(\frac{s}{\varepsilon^2})}  F^{\varepsilon,l}_y(q^{\varepsilon}(s),s) K(y-q^{\varepsilon}(s), \tau-s) ds,\\
G^{r} (y,\tau) &= \int_{-\infty}^{q^{\varepsilon}(0)} F_I^{\varepsilon} (\xi) K(y-\xi,\tau) d\xi \\
&- \int_0^{\tau} \e^{-b_r(\frac{s}{\varepsilon^2})} F^{\varepsilon,l}_y(q^{\varepsilon}(s),s) K(y-q^{\varepsilon}(s), \tau-s) ds,
\end{align*}
where $F^{\varepsilon}(y,\tau) := F(\frac{y}{\varepsilon},\frac{\tau}{\varepsilon^2})$ and 
\begin{align}\label{e:heatkernel}
K(x,t) := \frac{1}{\sqrt{4 \pi k t}} e^{-\frac{x^2}{4 kt}}.
\end{align}
is the heat kernel. \\
Since $G^{l}(q^{\varepsilon}(\tau),\tau) = e^{-b_l(\frac{\tau}{\varepsilon^2})}F^{\varepsilon}(q^{\varepsilon}(\tau),\tau) = 0$  and $G^{r}(q^{\varepsilon}(\tau),\tau)= e^{-b_r(\frac{\tau}{\varepsilon^2})}F^{\varepsilon}(q^{\varepsilon}(\tau),\tau) = 0$ we find
\begin{align*}
 &0 = F^{\varepsilon}(q^{\varepsilon}(\tau),\tau) = \int_{-\infty}^{q^{\varepsilon}(0)}e^{b_l(\frac{\tau}{\varepsilon^2})} F_I^{\varepsilon} (\xi) K(q^{\varepsilon}(\tau)-\xi,\tau) d\xi \\
& \phantom{0 = F^{\varepsilon}(q^{\varepsilon}(\tau),\tau)} + \int_0^{\tau} e^{b_l(\frac{\tau}{\varepsilon^2}) - b_l(\frac{s}{\varepsilon^2})} F^{\varepsilon}_y(q^{\varepsilon}(s),s) K(q^{\varepsilon}(\tau)-q^{\varepsilon}(s), \tau-s) ds
\end{align*}
and
\begin{align*}
& 0 = F^{\varepsilon}(q^{\varepsilon}(\tau),\tau) =  \int_{-\infty}^{q^{\varepsilon}(0)} e^{b_r(\frac{\tau}{\varepsilon^2})} F_I^{\varepsilon} (\xi) K(q^{\varepsilon}(\tau)-\xi,\tau) d\xi\\
& \phantom{ 0 = F^{\varepsilon}(q^{\varepsilon}(\tau),\tau)} - \int_0^{\tau} e^{b_r(\frac{\tau}{\varepsilon^2}) - b_r(\frac{s}{\varepsilon^2})} F^{\varepsilon}_y(q^{\varepsilon}(s),s) K(q^{\varepsilon}(\tau)-q^{\varepsilon}(s), \tau-s) ds  . 
\end{align*}
Adding the above equations gives $\mathscr{I}_{1,\varepsilon} + \mathscr{I}_{2,\varepsilon} = 0$ with
\begin{subequations}\label{e:I}
\begin{align}
\begin{split}\label{e:I1eps}
&\mathscr{I}_{1,\varepsilon} := \\
&~~~ \int_0^{\tau} \bigl[e^{b_l(\frac{\tau}{\varepsilon^2})-b_l(\frac{s}{\varepsilon^2})}-e^{b_r(\frac{\tau}{\varepsilon^2})-b_r(\frac{s}{\varepsilon^2})} \bigr]F_y^{\varepsilon}(q^{\varepsilon}(s),s) K(q^{\varepsilon}(\tau)-q^{\varepsilon}(s),\tau-s) ds \\
\end{split}
\end{align}
and
\begin{align}
\begin{split}\label{e:I2eps}
 &\mathscr{I}_{2,\varepsilon} := \\
& ~\int_{-\infty}^{q^{\varepsilon}(0)} e^{b_l(\frac{\tau}{\varepsilon^2})} F_I^{\varepsilon}(\xi) K(q^{\varepsilon}(\tau)-\xi,\tau) d\xi + \int^{\infty}_{q^{\varepsilon}(0)} e^{b_r(\frac{\tau}{\varepsilon^2})} F_I^{\varepsilon}(\xi) K(q^{\varepsilon}(\tau)-\xi,\tau) d\xi \\
\end{split}
\end{align}
\end{subequations}

\noindent We shall use the following estimate in order to compare $\mathscr{I}_{1,\varepsilon}$ and $\mathscr{I}_{2,\varepsilon}$ asymptotically for $\varepsilon \rightarrow 0$.
\begin{lem}
Assume \ref{a:flucspaces} and $F_I \in W^{1,\infty}(\mathbb{R})$. Then
\begin{align*}
\lVert \frac{d}{dx}U(\cdot,t) \rVert_{L^{\infty}(\mathbb{R})} \leq e^{\int_0^{\infty} \lvert \dot{b}(s) \rvert ds} \lVert \frac{d}{dx} F_I\rVert_{L^{\infty}(\mathbb{R})},
\end{align*}
where $U$ solves \eqref{e:U}.\\
\end{lem}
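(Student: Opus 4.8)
The plan is to estimate $V:=U_x$ by combining a Duhamel (variation-of-constants) representation with the $L^\infty$-contractivity of the heat semigroup and Gronwall's inequality. Writing $\dot b=\tfrac12(\dot b_l-\dot b_r)$ as in \ref{a:flucspaces}, the classical solution of \eqref{e:U} furnished by Theorem \ref{t:exist} satisfies the mild formulation
\[
U(\cdot,t)=K(\cdot,t)\ast F_I+\int_0^t \dot b(s)\,K(\cdot,t-s)\ast\lvert U(\cdot,s)\rvert\,ds,
\]
with $K$ the heat kernel \eqref{e:heatkernel}. Since $F_I\in W^{1,\infty}(\R)$ and $U(\cdot,t)$ is smooth for $t>0$, I would differentiate this identity in $x$, placing the derivative on the (Lipschitz) factors, to obtain
\[
V(\cdot,t)=K(\cdot,t)\ast F_I'+\int_0^t \dot b(s)\,K(\cdot,t-s)\ast\partial_x\lvert U(\cdot,s)\rvert\,ds.
\]

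The estimate then rests on two elementary facts. First, $\partial_x\lvert U\rvert=\sgn(U)\,U_x$ almost everywhere, so $\lVert\partial_x\lvert U(\cdot,s)\rvert\rVert_{L^\infty}\le\lVert V(\cdot,s)\rVert_{L^\infty}$. Second, $K(\cdot,t)$ is a nonnegative kernel of unit mass, hence convolution with it is a contraction on $L^\infty(\R)$. Taking $L^\infty$ norms and bounding the coefficient by $\lvert\dot b(s)\rvert$ yields the integral inequality
\[
\lVert V(\cdot,t)\rVert_{L^\infty}\le\lVert F_I'\rVert_{L^\infty}+\int_0^t\lvert\dot b(s)\rvert\,\lVert V(\cdot,s)\rVert_{L^\infty}\,ds.
\]
Gronwall's lemma gives $\lVert V(\cdot,t)\rVert_{L^\infty}\le\lVert F_I'\rVert_{L^\infty}\,e^{\int_0^t\lvert\dot b(s)\rvert\,ds}$, and since $b\in W^{1,1}(0,\infty)$ by \ref{a:flucspaces} the exponent is dominated by $\int_0^\infty\lvert\dot b(s)\rvert\,ds$, which is the asserted bound.

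The delicate point is justifying the differentiation of the mild formula, because $\lvert U\rvert$ fails to be differentiable precisely on the free boundary $\{U=0\}$. I would resolve this by noting that $\lvert U(\cdot,s)\rvert$ is Lipschitz in $x$ with weak derivative $\sgn(U)U_x\in L^\infty$, so one may legitimately write $\partial_x(K\ast\lvert U\rvert)=K\ast\partial_x\lvert U\rvert$ for such $W^{1,\infty}$ data, and the measure-zero zero set does not affect the $L^\infty$ bound. An alternative, perhaps more transparent, route is to differentiate \eqref{e:U1} directly: away from the free boundary $V=U_x$ solves the linear equation $V_t=kV_{xx}+qV$ with $q(x,t)=\dot b(t)\sgn U(x,t)$ and $\lvert q\rvert\le\lvert\dot b(t)\rvert$, and one then applies the comparison principle to $W:=V\,e^{-\int_0^t\lvert\dot b\rvert}$, which satisfies $W_t=kW_{xx}+(q-\lvert\dot b(t)\rvert)W$ with nonpositive zeroth-order coefficient, so that $\lVert W(\cdot,t)\rVert_\infty\le\lVert W(\cdot,0)\rVert_\infty=\lVert F_I'\rVert_\infty$. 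In either route the only genuine work is the regularity bookkeeping at the free boundary (and the far-field decay needed to run the maximum principle on all of $\R$); the single-zero structure established in Theorem \ref{t:exist} keeps the sign coefficient piecewise constant in $x$ and makes the argument clean.
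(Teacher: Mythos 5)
Your proposal is correct (at the paper's own level of rigor), but it takes a genuinely different route. The paper differentiates the PDE to get $W=U_x$ solving $W_t=kW_{xx}+\dot b(t)\sgn(U)W$, then runs an $L^{2j}$ energy estimate: multiply by $W^{2j-1}$, integrate over $\R$, discard the nonpositive diffusion term, apply Gronwall to $\int_\R W^{2j}\,dx$, and finally let $j\to\infty$ to recover the $L^\infty$ bound. Your primary route never differentiates the equation itself: you differentiate the Duhamel formula and work directly in $L^\infty$, using the unit-mass positivity of the heat kernel and the a.e.\ identity $\partial_x\lvert U\rvert=\sgn(U)U_x$, then Gronwall. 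Your alternative route shares the paper's differentiation step but replaces the energy argument by a comparison principle for the exponentially weighted function. What your $L^\infty$ route buys is nontrivial: the paper's argument tacitly assumes $\frac{d}{dx}F_I\in L^{2j}(\R)$ and enough decay of $W$, $W_x$ at infinity to justify the integration by parts, yet the transformed initial datum \eqref{e:transfi} is asymptotically periodic rather than decaying, so $\frac{d}{dx}F_I$ need not belong to any $L^p$ with $p<\infty$; working directly with $L^\infty$ norms and the semigroup contraction sidesteps this entirely and matches the hypothesis $F_I\in W^{1,\infty}(\R)$. What the paper's route buys is that it avoids both the maximum principle on the unbounded domain (which your alternative route needs, via a Phragm\'en--Lindel\"of-type argument for bounded solutions) and the differentiation under the convolution. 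The one point you should make explicit is the a priori finiteness needed to start Gronwall: before using the differentiated Duhamel identity, bound $\lVert U_x(\cdot,t)\rVert_{L^\infty}$ locally in time by putting the $x$-derivative on the kernel in the nonlinear term, i.e.\ use $\lVert \partial_x K(\cdot,t-s)\rVert_{L^1}\lesssim (t-s)^{-1/2}$ together with the local boundedness of $\lVert U(\cdot,s)\rVert_{L^\infty}$; this shows $\lVert U_x(\cdot,t)\rVert_{L^\infty}$ is finite and locally bounded, after which your integral inequality and Gronwall close the argument.
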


\noindent Note that this implies, that for some $C > 0$, $\lVert F_x \rVert_{L^{\infty}(\mathbb{R} \times (0,\infty))} \leq C$ and thus gives the $L^{\infty}(0,\infty)$-bound of $\lambda = \lambda(t)$ used in the proof of Theorem \ref{t:thm1} and \ref{t:thm2}.\\
\begin{proof}
We write $W = U_x$ and obtain by differentiating  \eqref{e:U}
\begin{subequations}\label{e:W}
\begin{align}
  W_t &= k W_{xx} + \dot{b}(t) \sgn(U) W \text{ in } \mathbb{R} \times \mathbb{R},\\
W(\cdot,0) &= \frac{d}{dx}F_I. 
\end{align}
\end{subequations}
Multiplying \eqref{e:W} by $W^{2j-1}$ for some fixed $j > 1$ and integrating over $\mathbb{R}$ yields
\begin{align*}
\frac{1}{2j}\frac{d}{dt} \int_{\R} W^{2j} dx = -k(2j-1)\int_{\R}W_x^2W^{2j-2} dx  + \dot{b}(t) \int_{\R}\sgn(U)W^{2j} dx,
\end{align*}
and, therefore,
\begin{align*}
\frac{d}{dt} \int_{\R} W^{2j}dx \leq 2j \lvert \dot{b}(t) \rvert \int_{\R}W^{2j}dx,
\end{align*}
and, hence,
\begin{align*}
  \lVert W(\cdot,t) \rVert_{L^{2j}(\R)} \leq e^{\int_0^t \lvert \dot{b}(s) \rvert ds }\lVert \frac{d}{dx} F_I \rVert_{L^{2j}(\R)}.
\end{align*}
This results letting the limit $j\rightarrow \infty$.
\end{proof}

\vspace*{1em}

\noindent We remark that $F_I \in W^{1,\infty}(\R)$ holds if, for example $f_I \in W^{1,\infty}(\R)$ has compact support. \\

\noindent Next we consider the rescaled initial datum in the limit $\varepsilon \rightarrow 0$.
\begin{prop}\label{p:Finit}
  Let $F_I = F_I(x)$ be given by \eqref{e:transfi}. Then the rescaled initial datum satisfies $F_I(\frac{x}{\varepsilon}) = F_I^0(x) + \frac{\varepsilon}{2} (M^l - M^r) \delta(x)
+ \frac{\varepsilon}{a} (\int_{-\infty}^0 z f_I^+(z) dz + \int_0^{\infty} z f_I^-(z) dz ) \delta (x) + \mathcal{O}(\varepsilon^2)$ in $\mathcal{D}'(\R)$, where 
\begin{align*}
F_I^0(x)&= \begin{cases}
\frac{1}{a} M^l &\text{ for } x < q(0) = 0\\
-\frac{1}{a} M^r &\text{ for } x > q(0) = 0.
\end{cases}
 \end{align*}
\end{prop}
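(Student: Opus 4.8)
The plan is to test the rescaled datum against an arbitrary $\phi \in \mathcal{D}(\mathbb{R})$ and expand the pairing in powers of $\varepsilon$. Writing $\langle F_I(\cdot/\varepsilon),\phi\rangle = \int_{\mathbb{R}}F_I(x/\varepsilon)\phi(x)\,dx$ and inserting the two branches of \eqref{e:transfi}, I would substitute $x = \varepsilon(w\mp na)$ in the $n$-th summand so as to factor out the spacing: on the left branch this turns $\int_{-\infty}^0\sum_{n\ge0}f_I^+(x/\varepsilon+na)\phi(x)\,dx$ into $\varepsilon\int_{-\infty}^0 f_I^+(w)\big[\sum_{n\ge0}\phi(\varepsilon w-\varepsilon n a)\big]\,dw$, and similarly on the right. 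The key observation is that the bracketed sum is a Riemann sum of step $h=\varepsilon a$ for $\phi$ over a half-line with right endpoint $\varepsilon w$.

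The central step is then the Euler--Maclaurin (trapezoidal) expansion of this Riemann sum, which I expect to supply exactly the two leading contributions. For the left branch it reads $\sum_{n\ge0}\phi(\varepsilon w-\varepsilon n a)=\tfrac{1}{\varepsilon a}\int_{-\infty}^{\varepsilon w}\phi(s)\,ds+\tfrac12\phi(\varepsilon w)+\mathcal{O}(\varepsilon)$, with the $\mathcal{O}(\varepsilon)$ remainder uniform in $w$ because $\phi$ is smooth and compactly supported (only $\mathcal{O}(1/\varepsilon)$ terms are nonzero). Substituting back, the $\tfrac{1}{\varepsilon a}$-term produces, after a further Taylor expansion $\int_{-\infty}^{\varepsilon w}\phi = \int_{-\infty}^0\phi + \varepsilon w\,\phi(0) + \mathcal{O}(\varepsilon^2)$, the zeroth-order piece $\tfrac{M^l}{a}\int_{-\infty}^0\phi$, i.e.\ the negative branch of $\langle F_I^0,\phi\rangle$, together with the first-order piece $\tfrac{\varepsilon}{a}\phi(0)\int_{-\infty}^0 w f_I^+(w)\,dw$; the $\tfrac12\phi(\varepsilon w)$-term contributes $\tfrac{\varepsilon}{2}\phi(0)M^l$. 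Carrying out the mirror computation on the right branch (with $+\varepsilon n a$, the half-line $(\varepsilon w,\infty)$, and an overall minus sign) yields $-\tfrac{M^r}{a}\int_0^\infty\phi$ at order $\varepsilon^0$ and $\varepsilon\phi(0)\big[\tfrac1a\int_0^\infty w f_I^-\,dw-\tfrac12 M^r\big]$ at order $\varepsilon$. Summing the two branches reconstructs $\langle F_I^0,\phi\rangle$ at leading order and, since $\langle\delta,\phi\rangle=\phi(0)$, the announced coefficient $\tfrac12(M^l-M^r)+\tfrac1a\big(\int_{-\infty}^0 z f_I^+\,dz+\int_0^\infty z f_I^-\,dz\big)$ at order $\varepsilon$.

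An equivalent and slightly cleaner bookkeeping, which I would use to organize the remainder estimates, is to note that $F_I^0$ is homogeneous of degree zero, so $F_I(x/\varepsilon)-F_I^0(x)=R(x/\varepsilon)$ with $R:=F_I-F_I^0$; assumption \eqref{a:finit} (in particular $x f_I\in L^1$) guarantees $R\in L^1(\mathbb{R})$, and the scaling identity $\langle R(\cdot/\varepsilon),\phi\rangle=\varepsilon\int_{\mathbb{R}}R(y)\phi(\varepsilon y)\,dy=\varepsilon\phi(0)\int_{\mathbb{R}}R+\mathcal{O}(\varepsilon^2)$ reduces everything to evaluating $\int_{\mathbb{R}}R$. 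A telescoping computation on each half-line (partitioning $(-\infty,0)$ into the intervals $(-(N+1)a,-Na)$, on which the left sum truncates to $\sum_{n=0}^N f_I^+(\cdot+na)$) then shows that $\int_{\mathbb{R}}R$ equals precisely the claimed coefficient, the same number produced by the Euler--Maclaurin route.

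The main obstacle is quantitative control of the remainders. Two points require care: first, the Euler--Maclaurin remainder must be shown uniform in the continuous parameter $w=\varepsilon^{-1}x$ and integrable against $f_I^\pm$, which is where the compact support and smoothness of $\phi$ enter; second, and more delicately, pushing the error down to genuine $\mathcal{O}(\varepsilon^2)$ (rather than $o(\varepsilon)$) requires a finite second moment of $f_I$, since the term $\int f_I^+(w)\big(\int_0^{\varepsilon w}\phi-\varepsilon w\,\phi(0)\big)\,dw$ is only $\mathcal{O}(\varepsilon^2)$ when $w^2 f_I^+\in L^1$. Under \eqref{a:finit} alone one obtains the expansion with an $o(\varepsilon)$ error; the stated $\mathcal{O}(\varepsilon^2)$ is recovered for the class of initial data used in the asymptotics --- for instance $f_I$ with compact support, as in the remark following the preceding Lemma --- for which all moments are finite and both the Taylor and Euler--Maclaurin remainders are genuinely of second order.
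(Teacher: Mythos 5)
Your primary route is correct and is essentially the paper's own proof: the paper also pairs with a test function, performs the same change of variables to reach $\varepsilon\int_{-\infty}^0 f_I^+(z)\sum_{n\ge0}\varphi(\varepsilon(z-na))\,dz$ and its mirror image, and evaluates the lattice sum by a quadrature rule plus Taylor expansion; the only cosmetic difference is that you apply the trapezoidal (Euler--Maclaurin) correction first and Taylor-expand afterwards, while the paper Taylor-expands first and then uses the midpoint rule. Your caveat about moments is also fair, and it applies equally to the paper's own argument: under \eqref{a:finit} alone (only $x f_I\in L^1$) the remainder is $\mathcal{o}(\varepsilon)$ by dominated convergence, and the stated $\mathcal{O}(\varepsilon^2)$ genuinely requires higher moments, e.g.\ compactly supported $f_I$.

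However, your ``equivalent and slightly cleaner bookkeeping'' via $R:=F_I-F_I^0$ is not equivalent --- it is wrong, and you should not use it to organize the remainder estimates. First, \eqref{a:finit} does not give $R\in L^1(\mathbb{R})$: take $f_I=\mathbb{1}_{[-a/2,0]}$ on $(-\infty,0]$ and $f_I\equiv 0$ on $(0,\infty)$. Then for $x<0$ the sum in \eqref{e:transfi} is the indicator of $\bigcup_{n\ge0}[-na-\tfrac a2,-na]$, so $R$ oscillates between $+\tfrac12$ and $-\tfrac12$ on intervals of length $\tfrac a2$ all the way to $-\infty$: it is bounded, $a$-periodic and not integrable. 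Second, and more seriously, this non-integrable oscillation is not a harmless error term; it generates part of the first-order coefficient. In this example the Proposition (and a direct computation) give the coefficient $\tfrac12 M^l+\tfrac1a\int_{-\infty}^0 z f_I^+(z)\,dz=\tfrac a4-\tfrac a8=\tfrac a8$, whereas the lattice-truncated integrals satisfy $\int_{-Na}^0 R\,dy=0$ for every $N$, so the telescoping route would return $0$. The discrepancy $\tfrac a8$ comes from the weak limit of the periodic part of $R(\cdot/\varepsilon)$ against $\phi$ at the discontinuity of $F_I^0$: integrating by parts once, the mean of the (periodic, bounded) antiderivative of the oscillation enters exactly at order $\varepsilon$. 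Hence the identity $\langle R(\cdot/\varepsilon),\phi\rangle=\varepsilon\phi(0)\int R+\mathcal{O}(\varepsilon^2)$ fails here in both hypothesis and conclusion. Your Euler--Maclaurin computation is immune to this because the trapezoidal term $\tfrac12\phi(\varepsilon w)$ retains precisely this boundary contribution; keep that route and discard the other.
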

\begin{proof}
It is immediate from the definitions of $F_I$ that, for every test function $\varphi \in C_0^\infty(\mathbb{R})$: 
\begin{align}\label{e:asyminit}
\begin{split}
  \int_{-\infty}^{\infty}&\left[ \sum_{n=0}^{\infty} f_I^+(\frac{y}{\varepsilon}+na) - \sum_{n=0}^{\infty} f_I^-(\frac{y}{\varepsilon} - na)\right] \varphi(y) dy =\\
&\varepsilon \int_{-\infty}^0 f_I^+(z)\sum_{n=0}^{\infty}  \varphi(\varepsilon(z-na)) dz - \varepsilon \int_0^{\infty} f_I^-(z)\sum_{n=0}^{\infty}  \varphi(\varepsilon(z+na)) dz.
\end{split}
\end{align}
It follows that
\begin{align}
\begin{split}\label{e:asyminit2}
\varepsilon \sum_{n=0}^{\infty}  \varphi(\varepsilon(z-na)) &= \varepsilon \sum_{n=0}^{\infty} (\varphi(-\varepsilon na) + \varepsilon z \varphi_x(-\varepsilon na)) + \mathcal{O}(\varepsilon^2)\\
&= \frac{1}{a} \int_{-\infty}^{\frac{\varepsilon a}{2}} \varphi(x) dx + \frac{\varepsilon z}{a} \int_{-\infty}^{\frac{\varepsilon a}{2}} \varphi_x(x) dx +  \mathcal{O}(\varepsilon^2)\\
&= \frac{1}{a} \int_{-\infty}^0 \varphi(x) dx + \frac{\varepsilon}{2} \varphi(0) + \frac{\varepsilon z}{a} \varphi(0) + \mathcal{O}(\varepsilon^2).
\end{split}
\end{align}
 Note that we split both integrals in the second line of \eqref{e:asyminit2} into two parts. The first one is approximated by the rectangle integration rule, while the contributions from the second interval over $(0,\frac{\varepsilon a}{2})$ were included in the $\mathcal{O}(\varepsilon^2)$ terms. \\
Similar calculations give $\varepsilon \sum_{n=0}^{\infty}  \varphi(\varepsilon(z+na)) = \frac{1}{a} \int^{\infty}_0 \varphi(x) dx + \frac{\varepsilon}{2} \varphi(0) - \frac{\varepsilon z}{a} \varphi(0) + \mathcal{O}(\varepsilon^2)$, which concludes the proof.
\end{proof}

\noindent Next we introduce the additional notation
\begin{align}
l^{\infty} = e^{b_l^{\infty}}, ~r^{\infty} = e^{b_r^{\infty}}, ~\alpha^l = \frac{l^{\infty}}{a} M^l \text{ and } \alpha^r = \frac{r^{\infty}}{a} M^r.
\end{align} 


\begin{thm}\label{t:thm1}
Assume \ref{a:flucspaces}, \ref{a:flucbound} and \ref{a:flucdec1}. Let $f_I$ be such that $F_I \in W^{1,\infty}(\mathbb{R})$. 
Then, as $t\rightarrow \infty$,
\begin{align*}
p(t)  = (\beta_1  + \mathcal{O}(\frac{1}{\sqrt{t}}) \sqrt{t},
\end{align*}
where $\beta_1 = \sqrt{4 k}\erf^{-1}(\frac{\alpha^l-\alpha^r}{\alpha^l+\alpha^r})$.
\end{thm}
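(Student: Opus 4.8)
The plan is to pass to the limit $\varepsilon \to 0$ in the identity $\mathscr{I}_{1,\varepsilon}+\mathscr{I}_{2,\varepsilon}=0$ from \eqref{e:I} and then undo the parabolic rescaling via $q^{\varepsilon}(\tau)=\varepsilon\,p(\tau/\varepsilon^2)$. Since $q^{\varepsilon}(1)=\varepsilon\,p(1/\varepsilon^2)$, the asserted expansion is, with $\varepsilon=t^{-1/2}$, equivalent to $q^{\varepsilon}(1)=\beta_1+\mathcal{O}(\varepsilon)$; I would in fact establish $q^{\varepsilon}(\tau)=\beta_1\sqrt{\tau}+\mathcal{O}(\varepsilon)$ for every fixed $\tau>0$ and specialize at the end. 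The proof then splits into two tasks: (i) showing that the nonlocal term $\mathscr{I}_{1,\varepsilon}$ is $\mathcal{O}(\varepsilon)$, and (ii) identifying the limit of $\mathscr{I}_{2,\varepsilon}$, viewed as a function of the unknown $q^{\varepsilon}(\tau)$, as an explicit profile whose unique zero is $\beta_1\sqrt{\tau}$.

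For task (ii) I would write $\mathscr{I}_{2,\varepsilon}=\Psi_\varepsilon(q^{\varepsilon}(\tau),\tau)$, where $\Psi_\varepsilon(z,\tau)$ is the right-hand side of \eqref{e:I2eps} with $q^{\varepsilon}(\tau)$ replaced by the free variable $z$. Using $b_l(\tau/\varepsilon^2)\to b_l^{\infty}$ and $b_r(\tau/\varepsilon^2)\to b_r^{\infty}$ from \ref{a:flucbound}, together with Proposition \ref{p:Finit} (the $\delta$-contributions and the $\mathcal{O}(\varepsilon^2)$-remainder integrate against the smooth kernel $K(z-\cdot,\tau)$ and produce only an $\mathcal{O}(\varepsilon)$ correction), one obtains $\Psi_\varepsilon(z,\tau)=\Psi_0(z,\tau)+\mathcal{O}(\varepsilon)$ uniformly on compact $z$-sets, where evaluating the Gaussian integrals of $F_I^0$ in terms of $\erfc$ yields
\[
\Psi_0(z,\tau)=\tfrac12\,\alpha^l\Bigl(1-\erf\bigl(\tfrac{z}{\sqrt{4k\tau}}\bigr)\Bigr)-\tfrac12\,\alpha^r\Bigl(1+\erf\bigl(\tfrac{z}{\sqrt{4k\tau}}\bigr)\Bigr).
\]
Since $\erf$ is strictly increasing and $\alpha^l,\alpha^r>0$, the map $\Psi_0(\cdot,\tau)$ is strictly decreasing with limits $\alpha^l>0$ and $-\alpha^r<0$ at $\mp\infty$, so its unique zero is exactly $z=\beta_1\sqrt{\tau}$ with $\erf(\beta_1/\sqrt{4k})=(\alpha^l-\alpha^r)/(\alpha^l+\alpha^r)$, i.e.\ the claimed $\beta_1$. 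This same strict monotonicity, inherited by $\Psi_\varepsilon$ for small $\varepsilon$, provides the quantitative stability that converts an $\mathcal{O}(\varepsilon)$ defect in the identity into an $\mathcal{O}(\varepsilon)$ error for $q^{\varepsilon}(\tau)$.

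The main obstacle is task (i), the bound $\mathscr{I}_{1,\varepsilon}=\mathcal{O}(\varepsilon)$, because the factor $F_y^{\varepsilon}(q^{\varepsilon}(s),s)=\varepsilon^{-1}F_x(p(s/\varepsilon^2),s/\varepsilon^2)$ is only of size $\mathcal{O}(\varepsilon^{-1})$ and must be beaten by the smallness of the bracket in \eqref{e:I1eps}. Writing that bracket as $e^{\,b_r(\tau/\varepsilon^2)-b_r(s/\varepsilon^2)}\bigl(e^{\,2(b(\tau/\varepsilon^2)-b(s/\varepsilon^2))}-1\bigr)$ and invoking the preceding Lemma for the uniform bound $\lVert F_x\rVert_{L^{\infty}}\le C$, I would estimate $|\mathscr{I}_{1,\varepsilon}|\lesssim \varepsilon^{-1}\int_0^{\tau}\bigl|b(\tfrac{\tau}{\varepsilon^2})-b(\tfrac{s}{\varepsilon^2})\bigr|\,(\tau-s)^{-1/2}\,ds$. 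The two endpoints are delicate: near $s=\tau$ the kernel is integrably singular, while near $s=0$ the decay estimate on $b$ is unusable, so I would split the integral at $s=\varepsilon^2$. On $(0,\varepsilon^2)$ the bracket is merely bounded by $2\lVert b\rVert_{L^{\infty}}$ (using \ref{a:flucspaces}) while the interval has length $\varepsilon^2$, contributing $\mathcal{O}(\varepsilon)$; on $(\varepsilon^2,\tau)$ the decay hypothesis \ref{a:flucdec1} gives $|b(t)-b^{\infty}|=\mathcal{O}(t^{-w})$ with $b^{\infty}=(b_l^{\infty}-b_r^{\infty})/2$, whence $\bigl|b(\tfrac{\tau}{\varepsilon^2})-b(\tfrac{s}{\varepsilon^2})\bigr|\lesssim \varepsilon^{2w}(\tau^{-w}+s^{-w})$, and the residual $\varepsilon^{2w-1}\int_{\varepsilon^2}^{\tau}(\tau^{-w}+s^{-w})(\tau-s)^{-1/2}\,ds$ is $\mathcal{O}(\varepsilon)$ precisely because $w>1$ makes $\int_{\varepsilon^2}^{\tau/2}s^{-w}\,ds=\mathcal{O}(\varepsilon^{2-2w})$ the dominant term, giving $\varepsilon^{2w-1}\cdot\varepsilon^{2-2w}=\varepsilon$. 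This is exactly where the threshold $w>1$ of \ref{a:flucdec1} is consumed, and the analogous but lossier estimate under \ref{a:flucdec2} is what forces $w>\tfrac32$ in the companion statement.

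Finally I would assemble the pieces. From $\Psi_\varepsilon(q^{\varepsilon}(\tau),\tau)=-\mathscr{I}_{1,\varepsilon}=\mathcal{O}(\varepsilon)$, the uniform convergence $\Psi_\varepsilon\to\Psi_0$ and the opposite signs of $\Psi_0(\pm\infty,\tau)$ first force, by a no-escape argument, that $q^{\varepsilon}(\tau)$ stays bounded and converges to the zero $\beta_1\sqrt{\tau}$ of $\Psi_0$; the uniform lower bound on $|\partial_z\Psi_\varepsilon|$ near that zero then upgrades this to $q^{\varepsilon}(\tau)=\beta_1\sqrt{\tau}+\mathcal{O}(\varepsilon)$. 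Setting $\tau=1$, $\varepsilon=t^{-1/2}$ and recalling $q^{\varepsilon}(1)=\varepsilon\,p(1/\varepsilon^2)$ yields $p(t)/\sqrt{t}=\beta_1+\mathcal{O}(t^{-1/2})$, which is the assertion.
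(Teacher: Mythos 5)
Your proposal is correct and follows essentially the same route as the paper: the same starting identity $\mathscr{I}_{1,\varepsilon}+\mathscr{I}_{2,\varepsilon}=0$, the same use of the gradient/$\lambda$ bound from the preceding Lemma to get $\mathscr{I}_{1,\varepsilon}=\mathcal{O}(\varepsilon)$ (your explicit split at $s=\varepsilon^2$ is a by-hand version of the paper's $\tfrac12$-fractional-integral estimate and consumes $w>1$ in exactly the same place), and the same reduction of $\mathscr{I}_{2,\varepsilon}$ via Proposition \ref{p:Finit} to the $\erf$ equation, which is then inverted using that $\lvert \alpha^l-\alpha^r\rvert/(\alpha^l+\alpha^r)<1$. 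Incidentally, your expression for $\Psi_0$ carries the correct signs, whereas the corresponding display in the paper has a sign typo that does not affect its conclusion.
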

\begin{proof} Note that since $kF_y^{\varepsilon}(q^{\varepsilon}(\tau),\tau) = \frac{1}{\varepsilon} \lambda(t)$, the bound on $\lambda$ that was obtained earlier yields, 
\begin{align*}
\mathscr{I}_{1,\varepsilon} \lesssim \frac{c}{\varepsilon} \lVert \lambda \rVert_{L^{\infty}(0,\infty)}  \mathscr{J}_{\varepsilon},
\end{align*}
with
\begin{align*}
  \mathscr{J}_{\varepsilon} & := \int_0^{\tau} \frac{1}{\sqrt{\tau-s}}~ \lvert e^{b_l(\frac{\tau}{\varepsilon^2})-b_l(\frac{s}{\varepsilon^2})}-e^{b_r(\frac{\tau}{\varepsilon^2})-b_r(\frac{s}{\varepsilon^2})}\rvert~ ds.
\end{align*} 
Since
\begin{align*}
\mathscr{J}_{\varepsilon}&= \varepsilon \int_0^t \frac{1}{\sqrt{t-u}}~  \lvert e^{b_l(t)-b_l(u)}-e^{b_r(t)-b_r(u)}\rvert~ du\\
&:=\varepsilon \mathscr{S}_t \quad \text{ with } t = \frac{\tau}{\varepsilon^2}
\end{align*}
we find $\mathscr{I}_{1,\varepsilon} \lesssim \lVert \lambda \rVert_{L^{\infty}(0,\infty)} \mathscr{S}_t$.\\
Consider now the $\frac{1}{2}$-fractional integral
\begin{align*}
(I_{\frac{1}{2}} f)(t) &= 
\int_0^{t} \frac{f(s)}{\sqrt{t-s}}~ ds.
\end{align*}
It is easy to verify that, if $f(t)\lesssim t^{-w_1}$ for $w_1 > 0$
\begin{align*}
(I_{\frac{1}{2}} f)(t) \lesssim t^{-\frac{1}{2}} + \lvert \ln t \rvert t^{-w_1+\frac{1}{2}}.
\end{align*}
Then the asymptotic assumption \ref{a:flucdec1} on $b_l$ and $b_r$ implies
\begin{align}\label{e:I1asym}
\mathscr{J}_{1,\varepsilon} = \mathcal{O}(\varepsilon) \text{ as } \varepsilon \rightarrow 0.
\end{align}
and, therefore
\begin{align*}
\mathscr{I}_{2,\varepsilon} = \int_{-\infty}^{q^{\varepsilon}(0)} e^{b_l^\infty} F_I^{\varepsilon}(\xi) K(q^{\varepsilon}(\tau)-\xi,\tau) d\xi + \int^{\infty}_{q^{\varepsilon}(0)} e^{b_r^\infty} F_I^{\varepsilon}(\xi) K(q^{\varepsilon}(\tau)-\xi,\tau) d\xi + \mathcal{O}(\varepsilon^2).
\end{align*}
Since, in view of Proposition \eqref{p:Finit},
\begin{align*}
\begin{aligned}
\begin{rlcases}
e^{ b_l^{\infty}} F_I^{\varepsilon}(x),~ x < 0\\
e^{ b_r^{\infty}} F_I^{\varepsilon}(x),~ x > 0
\end{rlcases}
\end{aligned}
=
\begin{aligned}
\begin{rlcases}
  \frac{1}{a}e^{b_l^{\infty}} M^l,~ x < 0\\
 \frac{1}{a}e^{b_l^{\infty}} M^r,~x < 0
\end{rlcases}
\end{aligned}
+\mathcal{O}(\varepsilon) \text{ in } \mathcal{D}'(\mathbb{R}),
\end{align*}
using that $q^{\varepsilon}(0) = 0$, we have
\begin{align*}
\mathscr{I}_{2,\varepsilon} = \alpha^l \int_{-\infty}^0 K (q^{\varepsilon}(\tau)-\xi, \tau) d\xi - \alpha^r \int_0^{\infty} K(q^{\varepsilon}(\tau)-\xi,\tau) d\xi + \mathcal{O}(\varepsilon).
\end{align*}
This yields together with \eqref{e:I1asym} the equation for $q^{\varepsilon}(\tau)$:
\begin{align*}
\alpha^l \int_{-\infty}^0 K(q^{\varepsilon}(\tau)-\xi, \tau) d\xi - \alpha^r \int_0^{\infty} K(q^{\varepsilon}(\tau)-\xi,\tau) d\xi = \mathcal{O}(\varepsilon).
\end{align*}
Hence we obtain
\begin{align*}
-\frac{\alpha^l}{2} (1+\erf(\frac{q^{\varepsilon}(\tau)}{\sqrt{4 k \tau}})) + \frac{\alpha^r}{2} (1-\erf(\frac{q^{\varepsilon}(\tau)}{\sqrt{4 k \tau}})) = \mathcal{O}(\varepsilon),
 \end{align*}
and thus
\begin{align*}
\erf(\frac{q^{\varepsilon}(\tau)}{\sqrt{4 k \tau}}) = \frac{\alpha^l - \alpha^r}{\alpha^l+\alpha^r} + \mathcal{O}(\varepsilon).
\end{align*}
The claim now follows 
\begin{align*}
q^{\varepsilon}(\tau) = \beta_1 \sqrt{\tau} + \mathcal{O}(\varepsilon)\sqrt{\tau},
\end{align*}
which, in view of the rescaling, yields
\begin{align*}
p(t) = \frac{q^{\varepsilon}(\varepsilon^2 t)}{\varepsilon} = \beta_1 \sqrt{t} + \mathcal{O}(1) \text{ as } t \rightarrow \infty.
\end{align*}
\end{proof}
\vspace*{1em}

\noindent For the case $\alpha^l = \alpha^r$ we shall make use of the following auxiliary Lemma:
\begin{lem}\label{l:aux}
Assume $f \in W^{1,\infty}(0,\infty)$ and $\vert f(t) \rvert \leq \frac{c}{(1+t)^\sigma}$ for $t > 0$ with $\sigma > 0,~c>0$. If $\sigma> \frac{3}{2}$, there
exists a function $H \in L^1_+(0,\infty) \cap L^\infty_+(0,\infty)$ such that, for $s \in (0,t)$
\begin{align}
\lvert f(t)-f(s) \rvert \leq \frac{H(s)}{\sqrt{t}}\sqrt{t-s}.
\end{align}
\end{lem}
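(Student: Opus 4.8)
The plan is to produce the minimal admissible weight explicitly and then to verify its integrability. Since the asserted inequality must hold for \emph{every} $t>s$, any candidate $H$ is forced to dominate
\[
H(s) := \sup_{t > s} \frac{\sqrt{t}}{\sqrt{t-s}}\,\lvert f(t) - f(s)\rvert ,
\]
so I would simply take this supremum as the definition of $H$; the entire content of the lemma then reduces to showing $H \in L^1_+(0,\infty)\cap L^\infty_+(0,\infty)$. Two elementary bounds feed the estimate of $H$. The first is the Lipschitz bound $\lvert f(t)-f(s)\rvert \le L\,(t-s)$ with $L := \lVert f'\rVert_{L^\infty(0,\infty)}$ (finite because $f\in W^{1,\infty}$), which is sharp when $t$ is close to $s$. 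The second is the decay bound $\lvert f(t)-f(s)\rvert \le \lvert f(t)\rvert + \lvert f(s)\rvert \le 2c\,(1+s)^{-\sigma}$ for $s<t$, where I use that $(1+\cdot)^{-\sigma}$ is decreasing; this is the better bound once $t-s$ is large. Note also that the inequality is tailored so that, after division by $\sqrt{t-s}$, the right-hand side becomes $H(s)/\sqrt t$, which is exactly what makes the fractional integral $\int_0^t \lvert f(t)-f(s)\rvert/\sqrt{t-s}\,ds \le \lVert H\rVert_{L^1}/\sqrt t$ decay like $t^{-1/2}$.

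Next I would fix $s$ and analyse the single-variable map $t\mapsto \sqrt{t/(t-s)}\,\lvert f(t)-f(s)\rvert$ through the substitution $h=t-s$. In the Lipschitz regime it is bounded by $\sqrt{(s+h)/h}\,Lh = L\sqrt{h(s+h)}$, which is increasing in $h$; in the decay regime it is bounded by $\sqrt{(s+h)/h}\,2c(1+s)^{-\sigma}$, which is decreasing in $h$ and tends to $2c(1+s)^{-\sigma}$ as $h\to\infty$. Consequently the supremum over $t$ is attained near the crossover scale $h_\ast$ at which the two bounds balance, i.e. $h_\ast\sim (1+s)^{-\sigma}$, and inserting $h_\ast$ yields a pointwise envelope of the form $H(s)\lesssim (1+s)^{\gamma}$ with a negative exponent $\gamma$ read off from $\sigma$. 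The $L^\infty$ bound is then immediate once $\gamma\le 0$, with constant controlled by $L$ and $c$, and the small-$s$ part of the $L^1$ integral is harmless since $H$ is bounded there; the $L^1$ bound reduces to $\int^\infty (1+s)^{\gamma}\,ds<\infty$, which is precisely the role of the hypothesis $\sigma>\tfrac32$, namely the condition guaranteeing $\gamma<-1$.

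I expect the genuine obstacle to lie in this last balancing step: extracting the \emph{sharp} crossover exponent $\gamma$ so that the pointwise decay order $\sigma$ of $f$ is converted into an \emph{integrable} weight. The delicate point is that one cannot afford to use only the global Lipschitz constant $L$ at the crossover scale; if one does, the amplitude $(1+s)^{-\sigma}$ and the slope $L$ combine to give only $\gamma\sim(1-\sigma)/2$, which is too weak for $L^1$ in the stated range. One must instead exploit the local smallness of $f$ near large $s$ — the interplay of the two intrinsic scales, the amplitude $(1+s)^{-\sigma}$ of $f$ and its variation on the crossover window $h_\ast$ — to sharpen $\gamma$ down to the value compatible with $\sigma>\tfrac32$. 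Establishing that sharpened local control, rather than the subsequent integration, is where the care is required, and it is the step I would write out in full detail.
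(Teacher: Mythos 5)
Your reduction to the minimal weight $H(s)=\sup_{t>s}\sqrt{t/(t-s)}\,\lvert f(t)-f(s)\rvert$ and the interpolation between the Lipschitz bound and the decay bound is, in substance, the same mechanism as the paper's proof, which writes $\lvert f(t)-f(s)\rvert=\sqrt{\lvert f(t)-f(s)\rvert}\,\sqrt{\lvert f(t)-f(s)\rvert}\le\sqrt{Lip(f)(t-s)}\,\sqrt{\lvert f(t)\rvert+\lvert f(s)\rvert}$ and then splits into the regimes $s\in[\alpha t,t]$ and $s\in[0,\alpha t]$; your $\min$-and-crossover analysis is the same interpolation in a different guise, and your computations up to the exponent $\gamma=(1-\sigma)/2$ are correct. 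The genuine gap is the step you explicitly defer: the ``sharpened local control'' that is supposed to improve $\gamma$ to an exponent integrable for all $\sigma>\frac32$ does not exist under the stated hypotheses. Pointwise smallness of $f$ gives no local control on $f'$, and such control is exactly what the improvement would require. Concretely, let $f$ be a triangular wave with slopes $\pm L$ oscillating between $0$ and the envelope $c(1+t)^{-\sigma}$; it satisfies both hypotheses, yet for every $s$ in the lower half of a descending edge (a set of density $\tfrac14$ in $(0,\infty)$), taking $t$ to be the next peak gives $t-s\lesssim(1+s)^{-\sigma}$ and $\lvert f(t)-f(s)\rvert\gtrsim\tfrac{c}{2}(1+s)^{-\sigma}$, hence $\sqrt{t/(t-s)}\,\lvert f(t)-f(s)\rvert\gtrsim\sqrt{Lc}\,(1+s)^{(1-\sigma)/2}$. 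Thus every admissible $H$ is bounded below by a multiple of $(1+s)^{(1-\sigma)/2}$ on a set of positive density, so $H\notin L^1(0,\infty)$ whenever $\sigma\le 3$. Your exponent is sharp, and the program you outline cannot be completed in the range $\frac32<\sigma\le 3$.

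You should also be aware that the paper's own proof reaches the weight $H(s)=C_2(1+\alpha s)^{-(\sigma-\frac12)}$, and hence the threshold $\sigma>\frac32$, only through an algebraic slip: in its first display the factor $\sqrt{\lvert f(t)-f(s)\rvert}$ is bounded by $\frac{c}{(1+t)^{\sigma}}+\frac{c}{(1+s)^{\sigma}}$ rather than by the square root of this quantity, which is an illegitimate strengthening since these quantities are smaller than $1$ for large times. With the root restored, the paper's argument produces exactly your envelope $(1+s)^{-(\sigma-1)/2}$ and therefore also requires $\sigma>3$. So your diagnosis of where the difficulty sits is accurate, but the conclusion to draw is different from the one you propose: either accept the threshold $\sigma>3$ (which the argument you outlined does prove, and which the sawtooth example shows is optimal), or strengthen the hypotheses. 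For instance, assuming in addition $\lvert f'(t)\rvert\le L(1+t)^{-\sigma}$ moves the crossover scale $h_\ast$ from $(1+s)^{-\sigma}$ to order $1$ and yields $H(s)\lesssim(1+s)^{\frac12-\sigma}$, which lies in $L^1(0,\infty)$ precisely when $\sigma>\frac32$, recovering the weight and threshold claimed in the lemma.
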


\begin{proof}
We rewrite 
\begin{align*}
\lvert f(t)-f(s) \rvert &= \sqrt{\lvert f(t)-f(s) \rvert}\sqrt{\lvert f(t)-f(s) \rvert}\\
&\leq \sqrt{Lip(f)}{\sqrt{t-s}} \left(\frac{c}{(1+t)^\sigma} + \frac{c}{(1+s)^\sigma}\right).
\end{align*}
Let $\alpha \in (0,1)$ and consider $s\in [\alpha t, t]$. Then
\begin{align*}
\lvert f(t)-f(s)\rvert \leq c \sqrt{Lip(f)} \sqrt{t-s}\frac{2}{(1+\alpha t)^{\sigma}}.
\end{align*}
Since $\min(a,b) \leq \sqrt{ab}$ for all $a,~b>0$, we have for $s\in[\alpha t, t]$
\begin{align*}
\lvert f(t)-f(s) \rvert \leq C \sqrt{t-s}\frac{1}{(1+\alpha t)^{\frac{\sigma}{2}}} \frac{1}{(1+s)^{\frac{\sigma}{2}}},
\end{align*}
and thus if $t >1$ and  $s \in [\alpha t, t]$
\begin{align*}
\lvert f(t)-f(s) \rvert \leq C_1 \sqrt{t-s}\frac{1}{\sqrt{t}} \frac{1}{(1+ \alpha s)^{\sigma - \frac{1}{2}}}.
\end{align*}
Since, for all $s \in [0, \alpha t]$,
\begin{align*}
\frac{\sqrt{t-s}}{\sqrt{t}} \in [\sqrt{1-\alpha},1],
\end{align*}
we have
\begin{align*}
\lvert f(t)-f(s)\rvert \leq \frac{\sqrt{t-s}}{\sqrt{t}} \frac{1}{\sqrt{1-\alpha}} \frac{2c}{(1+s)^{\sigma}}.
\end{align*}
The result follows with $H(s) = C_2 \frac{1}{(1+\alpha s)^{\sigma - \frac{1}{2}}}$.
\end{proof}

\noindent The next result is about the long time asymptotics of $p$ when $\alpha^r = \alpha^l$.
\begin{thm}\label{t:thm2}
Assume \ref{a:flucspaces}, \ref{a:flucbound} and \ref{a:flucdec2} and let $f_I$ be such that $F_I \in W^{1,\infty}(\mathbb{R})$. If $\alpha^l = \alpha^r$ 
\begin{align*}
p(t) = \beta_2 + \mathcal{o}(1) \text{ as } t \rightarrow \infty,
\end{align*}
with 
\begin{align}\label{e:beta2}
\beta_2 = \frac{1}{2 \alpha a}\left[l^{\infty}\int_{-\infty}^0 z f_I^+(z) dz + r^{\infty}\int_0^{\infty} z f_I^-(z) dz - a \int_0^{\infty} \zeta(\infty,s) \lambda(s) ds\right].
\end{align}
\end{thm}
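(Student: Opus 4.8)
The plan is to extract the next order term in the small-$\varepsilon$ expansion of the identity $\mathscr{I}_{1,\varepsilon}+\mathscr{I}_{2,\varepsilon}=0$. In the balanced regime $\alpha^l=\alpha^r=:\alpha$ the leading order analysis of Theorem \ref{t:thm1} only yields $\erf(q^{\varepsilon}(\tau)/\sqrt{4k\tau})=\mathcal{O}(\varepsilon)$, hence $q^{\varepsilon}(\tau)=\mathcal{O}(\varepsilon)\sqrt{\tau}$ and $p(t)=\mathcal{O}(1)$; this already shows $p$ is bounded but does not identify the limit. To pin down $\beta_2$ I would expand both $\mathscr{I}_{1,\varepsilon}$ and $\mathscr{I}_{2,\varepsilon}$ consistently to order $\varepsilon$ and solve the resulting linear relation for $q^{\varepsilon}(\tau)$.

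First, for $\mathscr{I}_{2,\varepsilon}$ I would insert the full expansion of Proposition \ref{p:Finit}, keeping the $\delta$-contributions. The leading ($F_I^0$) part produces $-\alpha\,\erf(q^{\varepsilon}(\tau)/\sqrt{4k\tau})$, which to the relevant order equals $-\tfrac{2\alpha}{\sqrt{\pi}}\,q^{\varepsilon}(\tau)/\sqrt{4k\tau}$ since its argument is $\mathcal{O}(\varepsilon)$. The order-$\varepsilon$ $\delta$-terms must be weighted by $l^{\infty}$ on the buyer (left) side and by $r^{\infty}$ on the vendor (right) side and evaluated against $K(q^{\varepsilon}(\tau),\tau)=(4\pi k\tau)^{-1/2}(1+\mathcal{O}(\varepsilon^2))$. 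Crucially, the $\tfrac{\varepsilon}{2}(M^l-M^r)\delta$ contribution cancels in this balanced case because $l^{\infty}M^l=r^{\infty}M^r$, leaving only $\tfrac{\varepsilon}{a\sqrt{4\pi k\tau}}\big(l^{\infty}\int_{-\infty}^0 zf_I^+\,dz+r^{\infty}\int_0^{\infty} zf_I^-\,dz\big)$.

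The core of the argument is the order-$\varepsilon$ analysis of $\mathscr{I}_{1,\varepsilon}$. Undoing the parabolic rescaling ($t=\tau/\varepsilon^2$, $u=s/\varepsilon^2$) and using $kF_y^{\varepsilon}(q^{\varepsilon}(s),s)=\varepsilon^{-1}\lambda(u)$ together with the heat-kernel scaling $K(\varepsilon z,\varepsilon^2\theta)=\varepsilon^{-1}K(z,\theta)$, I would rewrite $\mathscr{I}_{1,\varepsilon}=\tfrac1k\int_0^t\zeta(t,u)\lambda(u)K(p(t)-p(u),t-u)\,du$ with $\zeta(t,u):=e^{b_l(t)-b_l(u)}-e^{b_r(t)-b_r(u)}$. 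Since $p$ is bounded, the Gaussian factor tends to $1$ while $(t-u)^{-1/2}\sim t^{-1/2}=\varepsilon\,\tau^{-1/2}$, so one expects $\mathscr{I}_{1,\varepsilon}\sim\tfrac{\varepsilon}{k\sqrt{4\pi k\tau}}\int_0^{\infty}\zeta(\infty,u)\lambda(u)\,du$, where $\zeta(\infty,u)=\lim_{t\to\infty}\zeta(t,u)$. The rigorous justification---commuting the $t\to\infty$ limit with the $du$-integral despite the $(t-u)^{-1/2}$ singularity at $u=t$---is exactly where Lemma \ref{l:aux} and the hypothesis \ref{a:flucdec2} enter: linearizing $\zeta(t,u)\approx 2(b(t)-b(u))$ and applying the lemma to $f(t)=b(t)-\tfrac12(b_l^{\infty}-b_r^{\infty})$ gives $\sqrt{t/(t-u)}\,|\zeta(t,u)|\lesssim H(u)$ with $H\in L^1\cap L^{\infty}$, which simultaneously removes the singularity and provides the $L^1$-dominating function $H|\lambda|$ for a dominated-convergence argument; the a priori $L^{\infty}$-bound on $\lambda$ established above closes the estimate.

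Finally, substituting the two expansions into $\mathscr{I}_{1,\varepsilon}+\mathscr{I}_{2,\varepsilon}=0$, the common factor $(4\pi k\tau)^{-1/2}$ drops out and one solves the resulting relation for $q^{\varepsilon}(\tau)$, obtaining $q^{\varepsilon}(\tau)=\varepsilon\beta_2+o(\varepsilon)$; undoing the rescaling via $p(t)=q^{\varepsilon}(\varepsilon^2 t)/\varepsilon$ then yields $p(t)=\beta_2+o(1)$. The main obstacle is the interchange of limit and integration for $\mathscr{I}_{1,\varepsilon}$ described above, for which the stronger decay $w>\tfrac32$ (rather than $w>1$) is indispensable. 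A secondary point requiring care is the consistent bookkeeping of the $\delta$-terms of Proposition \ref{p:Finit} across the free boundary, ensuring the buyer part carries $l^{\infty}$ and the vendor part $r^{\infty}$, so that the coefficients combine to the stated form of $\beta_2$.
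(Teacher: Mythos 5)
Your proposal is correct and follows essentially the same route as the paper's proof: refine both terms of $\mathscr{I}_{1,\varepsilon}+\mathscr{I}_{2,\varepsilon}=0$ to order $\varepsilon$, using Proposition \ref{p:Finit} with the $l^{\infty}/r^{\infty}$ weights (and the cancellation of the $(M^l-M^r)$-delta term under $\alpha^l=\alpha^r$) for $\mathscr{I}_{2,\varepsilon}$, and Lemma \ref{l:aux} together with the $L^{\infty}$-bound on $\lambda$ and dominated convergence for $\mathscr{I}_{1,\varepsilon}$, then solve for $q^{\varepsilon}$ and undo the rescaling. In fact, your explicit reduction of $\zeta(t,s)$ to $2\bigl(b(t)-b(s)\bigr)$ so that Lemma \ref{l:aux} (stated for a single-variable function) applies is a detail the paper leaves implicit.
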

\begin{proof}
We revisit the equation $\mathscr{I}_{1,\varepsilon} + \mathscr{I}_{2,\varepsilon} = 0$ and refine the asymptotics for each term.
Recalling \eqref{e:I1eps} we find:
\begin{align*}
\mathscr{I}_{1,\varepsilon} &= - \int_0^{t} \left( e^{b_l(t)-b_l(s)}-e^{b_r(t)-b_r(s)}\right) \lambda(s) \frac{1}{\sqrt{4 k\pi (t-s)}}e^{-\frac{(p(t)-p(s))^2}{4k(t-s)}} ds \\
&= - \int_0^\infty \mathbb{1}_{\lbrace s < t \rbrace} \underbrace{\left(e^{b_l(t)-b_l(s)}-e^{b_r(t)-b_r(s)}\right)}_{:= \zeta(t,s)}  \lambda(s)\frac{1}{\sqrt{4 k \pi (t-s)}}e^{-\frac{(p(t)-p(s))^2}{4k(t-s)}} ds \\
&= -\frac{1}{\sqrt{4 k \pi t}}  \int_0^{\infty} \mathbb{1}_{\lbrace s < t \rbrace } \zeta(t,s) \lambda(s)\frac{1}{\sqrt{1-\frac{s}{t}}} e^{-\frac{(p(t)-p(s))^2}{4k(t-s)}} ds,
\end{align*}
where we used that $kF^{\varepsilon}_y(q^{\varepsilon}(\tau),\tau) = -\frac{1}{\varepsilon}\lambda(t)$ and $t = \frac{\tau}{\varepsilon^2}$.
Since in view of Theorem \ref{t:thm1} $p(t) = \mathcal{O}(1)$ as $t\rightarrow \infty$  we have
\begin{align*}
\frac{(p(t)-p(s))^2}{4k(t-s)} \lesssim \frac{1}{(t-s)},
\end{align*}
and, thus, pointwise for a.e. $s > 0$, as $t \rightarrow \infty$ we get 
\begin{align*}
  \mathbb{1}_{\lbrace s < t \rbrace} \zeta(t,s) \lambda(s) \frac{1}{\sqrt{1-\frac{s}{t}}} e^{-\frac{(p(s)-p(t))^2}{4k(t-s)}} \rightarrow \zeta(\infty,s) \lambda(s).
\end{align*}
Since Lemma \ref{l:aux} ensures that $\lvert \zeta(t,s)\rvert \leq \sqrt{1-\frac{s}{t}} H(s)$ for $0< s < t$ we have
\begin{align*}
\mathbb{1}_{\lbrace s < t \rbrace} \lvert \zeta(t,s)\rvert \lambda(s) \frac{1}{\sqrt{1-\frac{s}{t}}} e^{-\frac{(p(t)-p(s))^2}{4k(t-s)}} \leq \lVert \lambda \rVert_{L^{\infty}} H(s),
\end{align*}
and, by the Lebesgue dominated convergence, we find that
\begin{align*}
\mathscr{I}_{1,\varepsilon} = -\left(\frac{1}{\sqrt{4k\pi t}}\int_0^{\infty} \zeta(\infty,s) \lambda(s) ds + \mathcal{o}(1)\right).
\end{align*}
To refine the asymptotics of $\mathscr{I}_{2,\varepsilon}$ given by \eqref{e:I2eps} we use Proposition \ref{p:Finit} again with $f_I$ replaced by $l^{\infty} f_I(x)$ for $x <0$ and $r^{\infty} f_I(x)$ for $x > 0$.
Letting $\alpha := \alpha^l = \alpha^r$, we get
\begin{align*}
 \mathscr{I}_{2,\varepsilon} &= -\frac{\alpha}{2}\Bigl(1+\erf\left(\frac{q^{\varepsilon}(\tau)}{\sqrt{4k\tau}}\right)\Bigr) + \frac{\alpha}{2} \Bigl(1-\erf \left(\frac{q^{\varepsilon}(\tau)}{\sqrt{4 k \tau}}\right)\Bigr)\\
&+ \frac{\varepsilon}{a}\Bigl[l^\infty \int_{-\infty}^0 z f^+_I(z) dz + r^{\infty} \int_0^{\infty} z f_I^-(z) dz\Bigr]K(q^{\varepsilon}(\tau),\tau) + \mathcal{o}(\varepsilon).
\end{align*}
Hence $\mathscr{I}_{1,\varepsilon} + \mathscr{I}_{2,\varepsilon} = 0$ becomes
\begin{align*}
-\alpha \erf\left(\frac{q^{\varepsilon}}{\sqrt{4 k \tau}}\right) + \frac{\varepsilon}{a\sqrt{4 k \pi \tau}} e^{\frac{-(q^{\varepsilon}(\tau))^2}{4 k \tau}} \left[l^\infty \int_{-\infty}^0 z f^+_I(z) dz + r^{\infty} \int_0^{\infty} z f_I^-(z) dz\right]&\\
-\frac{\varepsilon}{\sqrt{4 k \pi \tau}}\int_0^{\infty} \zeta(\infty,s) \lambda(s) ds  &= \mathcal{o}(\varepsilon).
\end{align*}
We conclude 
\begin{align*}
q^{\varepsilon}(\tau)  = (\beta_2 + \mathcal{o}(1))\varepsilon \text{ uniformly on compact intervals of } \tau \text{ as } \varepsilon \rightarrow 0,
\end{align*}
with $\beta_2 = \frac{1}{2 \alpha a}\left[l^{\infty}\int_{-\infty}^0 z f_I^+(z) dz + r^{\infty}\int_0^{\infty} z f_I^-(z) dz - a \int_0^{\infty} \zeta(\infty,s) \lambda(s) ds\right]$.\\
Since $p(t) = \frac{q^{\varepsilon}(\varepsilon^2 t)}{\varepsilon}$ we obtain the claim.
\end{proof}

\subsection{Periodic market size fluctuations}
\noindent We now discuss the price formation model in case of periodic fluctuations, that is we assume for all $s \in [0,\infty)$, there exist $\tau_l>0$ and $\tau_r>0$ such that
\begin{align*}
b_l(s) &= b_l(s + \tau_l)  \text{ and } b_r (s) = b_r(s + \tau_r).
\end{align*}
At first we study the highly oscillatory case on time intervals of length $\mathcal{O}(1)$, that is we choose $\varepsilon > 0$ small and replace  $b_l(t)$, $b_r(t)$ in 
\eqref{e:ll2} (and \eqref{e:F}, \eqref{e:lldet} and \eqref{e:U}) respectively by $b^{\varepsilon}_l(t) := b_l(\frac{t}{\varepsilon^2})$, and $b_r^{\varepsilon}(t):=b_r(\frac{t}{\varepsilon^2})$. The PDE is
then studied on the time interval $(0,T)$, where $0 < T < \infty$ is $\varepsilon$-independent.\\
 Then the IVP \eqref{e:U} reads: 
\begin{subequations}\label{e:Ueps}
\begin{align}
&U_t^{\varepsilon} = k U_{xx}^{\varepsilon} + \frac{1}{2\varepsilon^2}(\dot{b}_l(\frac{t}{\varepsilon^2}) - \dot{b}_r(\frac{t}{\varepsilon^2})) \lvert U^{\varepsilon} \rvert \text{ in } \mathbb{R} \times [0,T]\\
&U^{\varepsilon}(x,t=0) = F_I(x).
\end{align}
\end{subequations}
Note that the exponential transformation \eqref{e:exptrans} of equation \eqref{e:U} gives:
\begin{subequations}\label{e:Geps}
\begin{align}
G_t^{\varepsilon}(x,t) &= k G_{xx}^{\varepsilon}(x,t),~~ x \neq p^{\varepsilon}(t)\\
G^{\varepsilon}(p^{\varepsilon}(t),t) &= 0,\phantom{G_{xx}^{\varepsilon}(x,t) }~~ t > 0\\
l(\frac{t}{\varepsilon^2}) G_x^{\varepsilon} \mid_{x = p^{\varepsilon}(t)^-} &= r(\frac{t}{\varepsilon^2}) G_x^{\varepsilon} \mid_{x = p^{\varepsilon}(t)^+}\label{e:contGeps}\\
G^{\varepsilon}(x,t=0) &= F_I(x).
\end{align}
\end{subequations}
Assume that $\frac{\tau_l}{\tau_r}$ is rational, $r$ is uniformly positive and $\frac{l}{r}$ is not constant. Then, writing 
\eqref{e:contGeps} as
\begin{align*}
\frac{l(\frac{t}{\varepsilon^2})}{r(\frac{t}{\varepsilon^2})} = \frac{G_x^{\varepsilon}(p^{\varepsilon}(t)^+,t)}{G_x^{\varepsilon}(p^{\varepsilon}(t)^-,t)}.
\end{align*} 
We conclude that $G_x^{\varepsilon}(p^{\varepsilon}(t)^+,t) / G_x^{\varepsilon}(p^{\varepsilon}(t)^-,t)$ converges weakly to a constant (the average of
$\frac{l(\tau)}{r(\tau)}$ over the period) and thus $G_x^{\varepsilon}(p^{\varepsilon}(t)^+,t)$ and $G_x^{\varepsilon}(p^{\varepsilon}(t)^-,t)$ cannot both converge strongly. 
Therefore it is not possible to pass to the limit $\varepsilon \rightarrow 0$ in \eqref{e:contGeps} directly, that is the most 
basic formal argument fails.\\
Next we consider the problem  
\begin{align}
&U_t^{\varepsilon} = k U_{xx}^{\varepsilon} + \frac{1}{\varepsilon^2}\dot{b}(\frac{t}{\varepsilon^2}) \mathcal{f}(\Ueps) \quad \text{ in } \mathbb{R} \times \mathbb{R}_+\label{e:Ueps3}\\
&U^{\varepsilon}(x,t=0) = F_I(x), \nonumber
\end{align}
with $\mathcal{f} = \mathcal{f}(u)$ a smooth, non-negative function with $\mathcal{f}(0) = 0$ and $b$ a $1$-periodic function with $\langle  b \rangle = \int_0^1 b(\tau) d\tau = 0$; here we do
not assume $b(0) = 0$ but instead $b(\tau) = \frac{1}{2}\left(b_l(\tau)-b_r(\tau) - (\langle b_l \rangle - \langle b_r \rangle)\right)$.\\
We make the following asymptotic ansatz
\begin{align}\label{e:Ueps4}
\Ueps = \Phi(b(\frac{t}{\varepsilon^2}), V) + \varepsilon^2 W(x,t, \frac{t}{\varepsilon^2}) + \mathcal{O}(\varepsilon^3),
\end{align}
with $W$ being a $1$ periodic function in $\tau = \frac{t}{\varepsilon^2}$ and $\Phi = \Phi(s,\xi)$ solving 
\begin{subequations}\label{e:Phi}
\begin{align}
&  \Phi_s = \mathcal{f}(\Phi) \quad \text{ for } s\in \R\\
&\Phi(s=0, \xi) = \xi.
\end{align}
\end{subequations}
Noting that $\Phi_{\xi}$ satisfies
\begin{align*}
&(\Phi_{\xi})_s = \mathcal{f}'(\Phi) \Phi_{\xi}, \text{ and } ~\Phi_{\xi}(s,0,\xi) = 1 \, ,
\end{align*}
and we conclude that, for all $s > 0$ and $\xi \in \mathbb{R}$, 
\begin{subequations}
\begin{align}\label{e:Phixis}
\Phi_{\xi}(s,\xi) = \exp(\int_0^s \mathcal{f}'(\Phi(r,\xi))dr)
\end{align}
and
\begin{align}\label{e:Phixixis}
\Phi_{\xi\xi}(s,\xi) = \Phi_{\xi}(s, \xi) \int_0^s \mathcal{f}''(\Phi(r,\xi))\Phi_{\xi}(r,\xi) dr \, .
\end{align}
\end{subequations}
Next we  determine necessary conditions for the asymptotics \eqref{e:Ueps4} to hold. 
We start by computing
\begin{align*}
&\Ueps_t = \Phi_{\xi} V_t + \frac{1}{\varepsilon^2} \Phi_s\dot{b}  + \varepsilon^2 W_t + \dot{W} + \mathcal{O}(\varepsilon^3), ~\Ueps_x = \Phi_{\xi} V_x + \varepsilon^2 W_x + \mathcal{O}(\varepsilon^3) \\
&\Ueps_{xx} = \Phi_{\xi \xi} (V_x)^2 + \Phi_{\xi} V_{xx} + \varepsilon^2 W_{xx} + \mathcal{O}(\varepsilon^3). 
\end{align*}
Substituting $\Ueps$ and its derivatives into \eqref{e:Ueps} gives
\begin{align*}
\Phi_{\xi}(V_t-V_{xx}) - \Phi_{\xi \xi} (V_x)^2 = &- (\dot{W} + \dot{b} W\mathcal{f}'(\Phi)) - \varepsilon^2(W_t -W_{xx} -\dot{b}(W) \mathcal{f}''(\Phi + \sigma \varepsilon^2 W)),
\end{align*} 
where we have used the Taylor expansion of the term $\mathcal{f}(\Phi + \varepsilon^2 W)$ and ignored the $\mathcal{O}(\varepsilon^2)$ term. Note that
\begin{align*}
\dot{W} + \dot{b} W \mathcal{f}'(\Phi)  = \frac{1}{g} \frac{d}{d\tau} (g W),
\end{align*}
with $g(\tau, x, t) := \exp(-\int_0^{\tau} \dot{b}(\psi) \mathcal{f}'(\Phi(b(\psi), V(x,t)))  d\psi)$. Since
\begin{align*}
\int_0^{\tau} \dot{b}(\psi) \mathcal{f}'(\Phi(b(\psi), V(x,t)))  d\psi = \int_{b(0)}^{b(\tau)} \mathcal{f}'(\Phi(w, V(x,t))) dw
\end{align*}
we can rewrite $g$ as 
\begin{align*}
g(\tau, x, t) = \exp(-\int_ {b(0)}^{b(\tau)}  \mathcal{f}'(\Phi(w, V(x,t)))  dw)
\end{align*}
 and deduce that $\tau \rightarrow W g$  is $1$-periodic as well. Now we take the mean. This gives
\begin{align}\label{e:Phi2}
 \langle g \Phi_{\xi} \rangle (V_t - V_{xx}) - \langle g \Phi_{\xi \xi} \rangle (V_x)^2 = 0.
\end{align}
From \eqref{e:Phixis} we deduce that $\frac{1}{g(\tau,x,t)} = \frac{\Phixi(b(\tau),V)}{\Phixi(b(0), V)}$, while \eqref{e:Phixixis} gives 
\begin{align}
&\langle g \Phi_{\xi \xi} \rangle = \Phixi(b(0),V) \partial_{\xi} \int_0^1 \int_0^{b(\tau)} \mathcal{f}'(\Phi(r,V)) dr d\tau.\label{e:Phixixi}
\end{align}
Substituting \eqref{e:Phixixi} in \eqref{e:Phi2} and dividing by $\Phi_{\xi}(b(0),V)$ gives the necessary condition for periodicity of $\Weps(x,t,\tau)$ in the $\tau$-variable, that is
\begin{align*}
V_t = V_{xx} + \partial_x \langle \int_0^b \mathcal{f}'(\Phi(r,V)) dr \rangle V_x = 0.
\end{align*}

\noindent In the following we present a formal argument in the case $\mathcal{f}(U) = \frac{1}{2} \lvert U \rvert$. Since $\mathcal{f}'(U) = \frac{1}{2} \sgn(U)$ we have (as computed before)
\begin{align*}
\Phi(s,\xi) = e^{\frac{1}{2} s \sgn \xi} \xi \quad \text{ and } \quad \int_0^b \mathcal{f}'(\Phi(r,V)) dr = \frac{1}{2}b(\tau) \sgn V.
\end{align*}
We formally obtain that $V_t = V_{xx}$. Thus 
\begin{align*}
\Ueps(x,t) \sim e^{\frac{1}{2}(b(\frac{t}{\varepsilon^2}) ) \sgn V(x,t)} V(x,t),
\end{align*}
where $V$ solves the heat equation with initial datum
\begin{align}\label{e:VI}
V(x,t=0) &= 
\begin{cases}
e^{-\frac{1}{4}(b_l(0)-b_r(0) - \langle b_l \rangle + \langle b_r \rangle)} ~U_I(x) & x < p(0)\\
e^{~\frac{1}{4}(b_l(0)-b_r(0) - \langle b_l \rangle + \langle b_r \rangle)} ~U_I(x) & x > p(0).
\end{cases}
\end{align}

\noindent By using the nonlinear transformation \eqref{e:invF} from the heat equation to \eqref{e:lasrylions} it follows that the limiting free boundary of the Lasry-Lions model with periodic market fluctuations is given by the free boundary of the original Lasry-Lions model without market fluctuations but with a changed initial buyer-vendor distribution. $f_I^+$ and $f_I ^-$  are multiplied by the same factors as $U_I$ in \eqref{e:VI}.\\
Note that the asymptotics presented here do not even hold formally if the periods $\tau_l$ and $\tau_r$ of $b_l$ and $b_r$, respectively, are not rationally related.\\

\begin{rem}
A related problem is to study periodic mass-fluctuations $b_l = b_l(t)$ and $b_r = b_r(t)$ with $\mathcal{O}(1)$-periods and look for the limits as $t \rightarrow \infty$ of
the free boundary $p = p(t)$.  After the parabolic rescaling $x = \frac{y}{\varepsilon}$
and $t = \frac{\tau}{\varepsilon^2}$ and the usual exponential transformation the problem \eqref{e:Ueps} with $y$ and $\tau$ as independent variables is obtained, where the initial datum
$F_I(\cdot)$ is replaced by $F_I(\frac{\cdot}{\varepsilon})$. \\ 
\end{rem}

\noindent A rigorous proof of the homogenization result in the case of periodic market size fluctuations will be the subject of a future paper.\\

\section{Numerical simulations:}\label{s:numsim}
We conclude the discussion about the long time asymptotic behavior in the case of deterministic and periodic market size fluctuations by presenting numerical experiments.  \\
\noindent The numerical simulations are based on a finite difference discretization of \eqref{e:lldet} in space and a Runge-Kutta time splitting scheme. 
We use the following Strang time splitting scheme (with time step denoted by $\Delta t$ and $t_i = i \Delta t$):
\begin{enumerate}
\item Solve the ODE $F_t(x,t) = \dot{b}_l(t) F^+(x,t) - \dot{b}_r(t) F^-(t)$ explicitly in time on the interval $t \in [t_i, t_i+0.5 \Delta t]$.
\item Evolve $F$ according to the heat equation $F_t(x,t) = F_{xx}(x,t)$ for a full time step, that is $t \in [t_i, t_{i+1}]$, using an explicit
$4$-th order Runge Kutta time stepping scheme and a classic finite difference discretization in space.
\item Solve the ODE $F_t(x,t) = \dot{b}_l(t) F^+(x,t) - \dot{b}_r(t) F^-(t)$ explicitly in time on the interval $t \in [t_i+0.5 \Delta t, t_{i+1}]$.
\end{enumerate}
For the following simulations the computational domain is set to $[-50,50]$ (where $p_{\max} = 50$ corresponds to the scaled maximum price) and is split into $10^4$ equidistant subintervals if not stated otherwise.
Then \eqref{e:lldet} is solved with the following nonlinear boundary conditions
\begin{align*}
F_x(-p_{\max},t) = F_x(-p_{\max}+a,t) \quad \text{ and } \quad F_x(p_{\max},t) = F_x(p_{\max}-a,t),
\end{align*}
which correspond to homogeneous Neumann boundary conditions in the original Lasry and Lions model. Note that we choose the computational domain sufficiently large, to ensure that the price dynamics
are not influenced by the boundary conditions for a large time. However it is not possible to neglect the influence on the numerical simulations as we shall illustrate
in the first example.

\subsection{Price evolution of the classic Lasry and Lions model} In the first example we compare the theoretical long time asymptotic behavior of the Lasry and Lions model \eqref{e:lasrylions}
with the simulation results.  Note that \eqref{e:lasrylions} corresponds to the simulation of \eqref{e:F} with $\dot{b}_l(t) = \dot{b}_r(t) = 0$, that is the simulations of the heat equation
with an initial datum of the form $F_I(x) = \mathbb{1}_{x < 0} - 0.95 \times \mathbb{1}_{x > 0}$. 
Figure \ref{f:ll} illustrates the behavior of the free boundary for the Lasry and Lions model \eqref{e:lasrylions} as a function of time. The relative
error $\eta = \lvert \frac{c \sqrt{t} - p_{approx}(t)}{c \sqrt{t}}\rvert$ decays fast in a good agreement of the numerically computed price $p_{approx}$ and the theoretical results. In fact the numerical
results indicate that the relative error decays like $t^{-\frac{1}{2}}$, see Figure \ref{f:ll}b).\\
\begin{figure}
\begin{center}
\subfigure[Evolution of the price]{\includegraphics[width=0.4\textwidth]{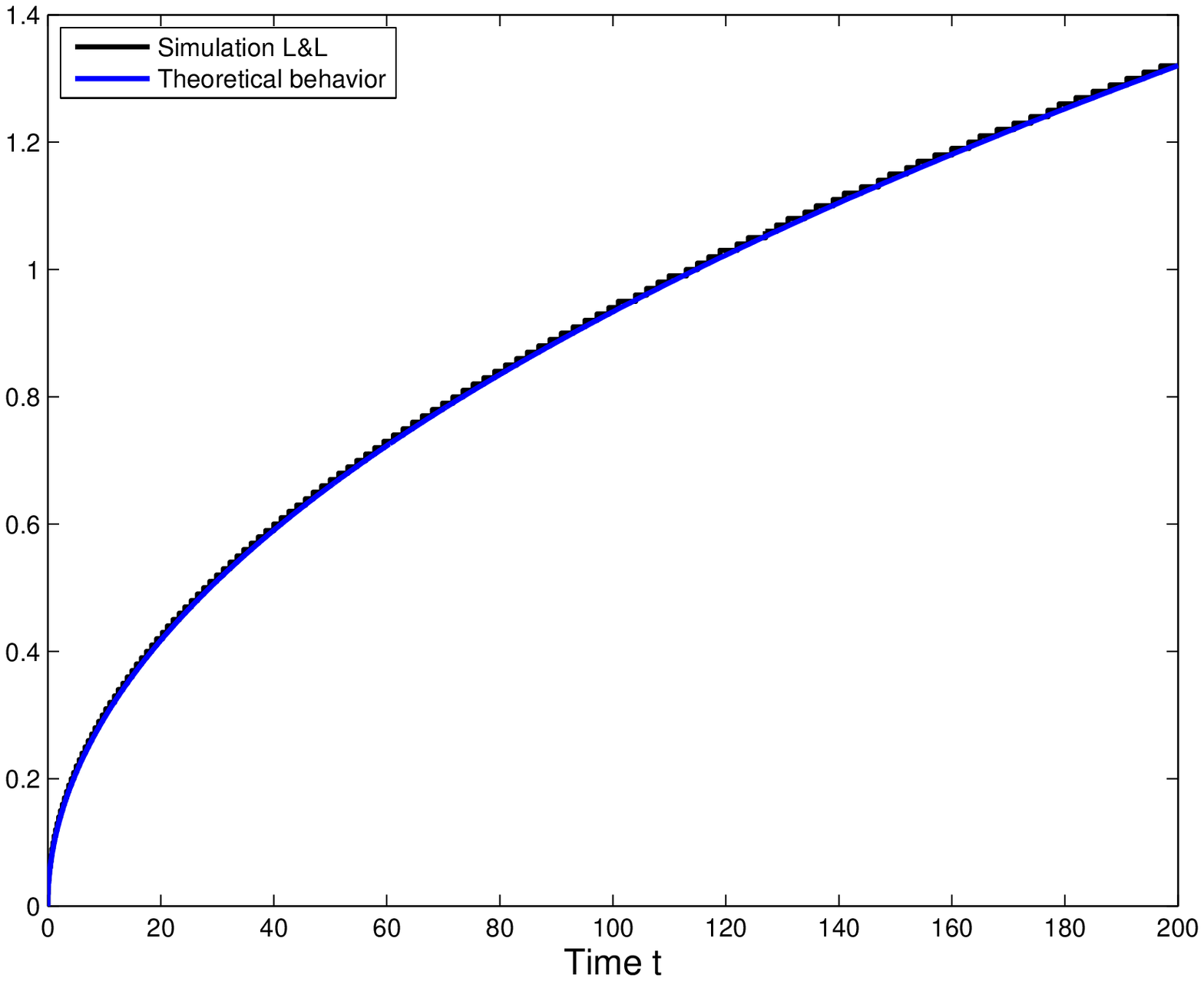}}\hspace*{0.25cm}
\subfigure[Relative error $\eta$]{\includegraphics[width=0.4\textwidth]{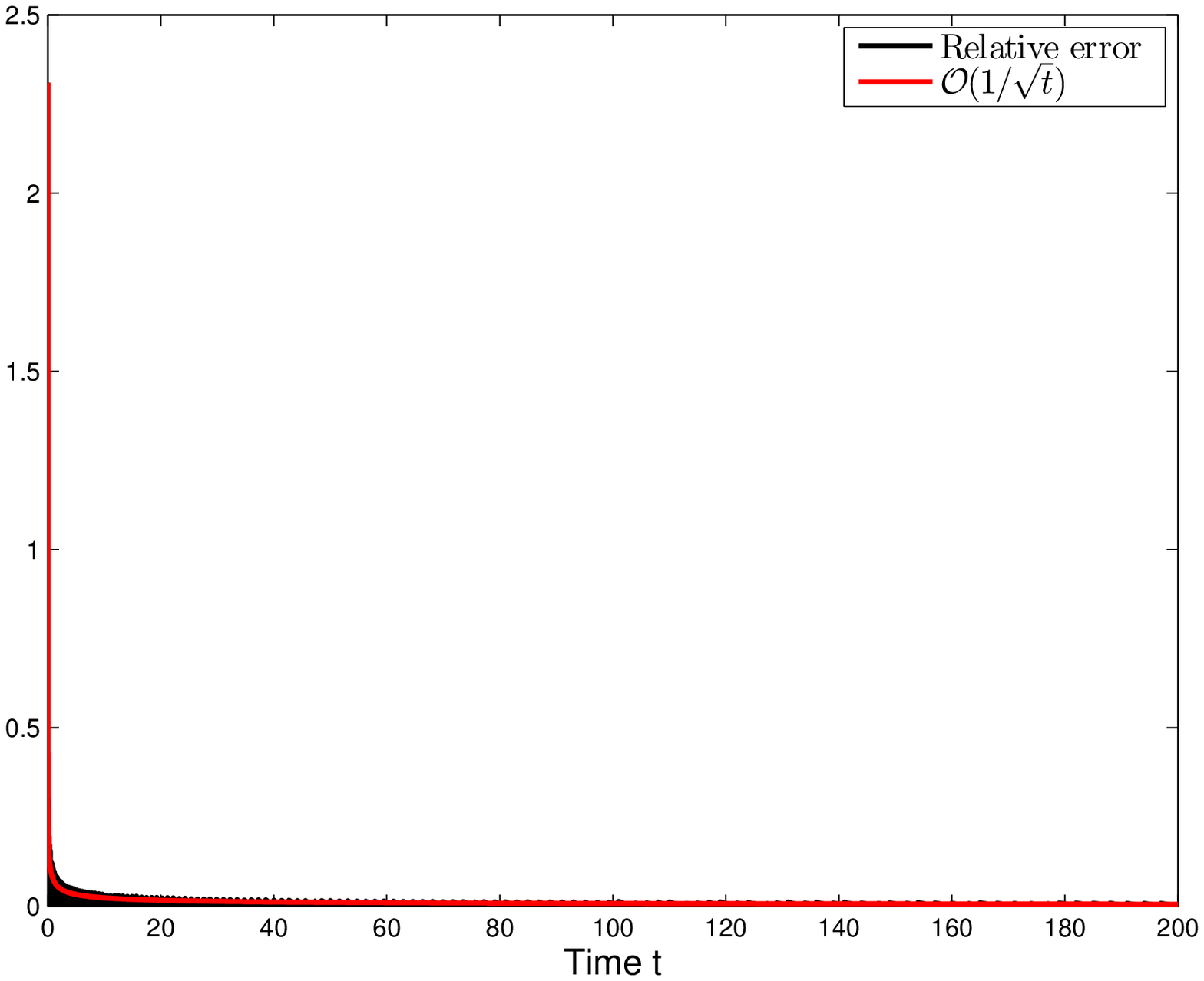}}
\caption{Comparison of the price evolution for the classic L\&L model \eqref{e:lasrylions}.}\label{f:ll}
\end{center}
\end{figure}
The next example illustrates the influence of the boundary on the price dynamics. Note that the stationary price $p_{\infty}$ in the classic Lasry and Lions model on a bounded domain $(-p_{\max}, p_{\max})$
with Neumann boundary conditions is given by
\begin{align}\label{e:pinfty}
  p_{\infty} = \frac{2 M^l p_{\max} - a (M^l-M^r)}{2(M^l+M^r)} - \frac{p_{\max}}{2}.
\end{align}
We choose an initial datum $f_I$ with equal masses of buyers and vendors, i.e.
\begin{align*}
f_I(x) = 
\begin{cases}
1  &\text{ for } -5 \leq x \leq -1\\
-x &\text{ for } -1 < x  \leq 2 \\
2 &\text{ for } 2 < x \leq 3.25 \\
0 &\text{ otherwise.}
\end{cases}
\end{align*}
 While the asymptotic analysis on the unbounded domain postulates the
convergence towards the constant $\beta_2$ given by \eqref{e:beta2}, the stationary price on the bounded domain, given by \eqref{e:pinfty}, corresponds to $p_{\infty} = 0$. Figure    
\ref{f:llequmass} clearly illustrates the change of the price dynamics for differently sized domains $\Omega = [-p_{\max}, p_{\max}]$. The black line at $p=-0.3475$ corresponds to
the predicted analytic constant - we observe that the prices initially converge to this value, but then reverse their dynamics towards $p_{\infty} = 0$ due to the influence of the boundary
conditions. The smaller the computational domain the earlier the reversal happens. 
\begin{figure}[h!]
\begin{center}
\includegraphics[width=0.5\textwidth]{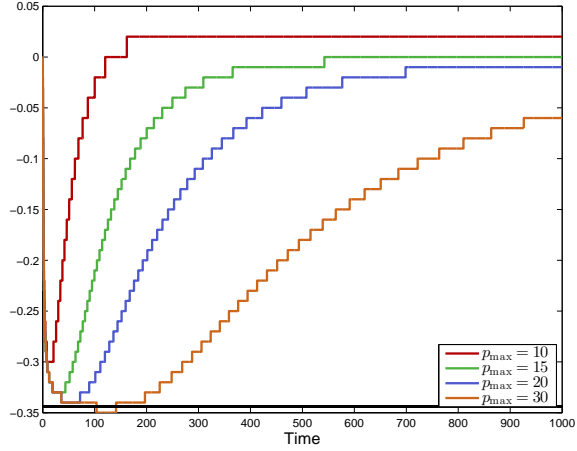}
\caption{Evolution of the free boundary in the classic Lasry and Lions model in the case of equal market size distribution for different computational domains $\Omega = [-p_{\max}, p_{\max}]$.}\label{f:llequmass}
\end{center}
\end{figure}

\subsection{Price evolution for stabilizing market size fluctuations}
Next we compare the simulated price dynamics for different choices of $b_l$ and $b_r$ with the theoretical results. We consider the three cases:
\begin{align}\label{e:detfluc}
\begin{split}
&\text{linear: } ~b_l(t) = c_l \frac{ t}{t+1},~\quad \text{ exponential: } b_l(t) = c_l(1-\exp(-t^2))\\
&\text{quadratic: } ~b_l(t) = c_l \frac{t^2}{t^2+1},
\end{split}
\end{align}
with $c_l = 1.12345$. The fluctuations of the vendor market size distribution have the respectively same form with $c_r = 1.0$. Hence $b_l^\infty = c_l$ and $b_r^\infty = 1$. We start with simulations in the case of different initial masses of buyers and vendors. The initial datum is given by 
$F_I(x) = \mathbb{1}_{x < 0} - 0.9 \times \mathbb{1}_{x > 0}$. 
Figure \ref{f:deterministic} compares the theoretical long time asymptotics based on the results of Theorem \ref{t:thm1} and the numerical simulations. We observe that
the convergence rate of the functions $b_l$ and $b_r$ towards their asymptotic value determines the constant $o(1)$ - the faster the convergence the closer the
behavior with respect to the theoretical predictions. The relative errors for the respective cases in Figure \ref{f:deterministic}b) illustrate the fast convergence of the free boundary, they decay
 like $t^{-\frac{1}{2}}$ (the inset in Figure \ref{f:deterministic}(b) corresponds to the enlarged view of the price dynamics for small times).\\
\begin{figure}[h!]
\begin{center}
\subfigure[Evolution of the price.]{\includegraphics[width=0.45\textwidth]{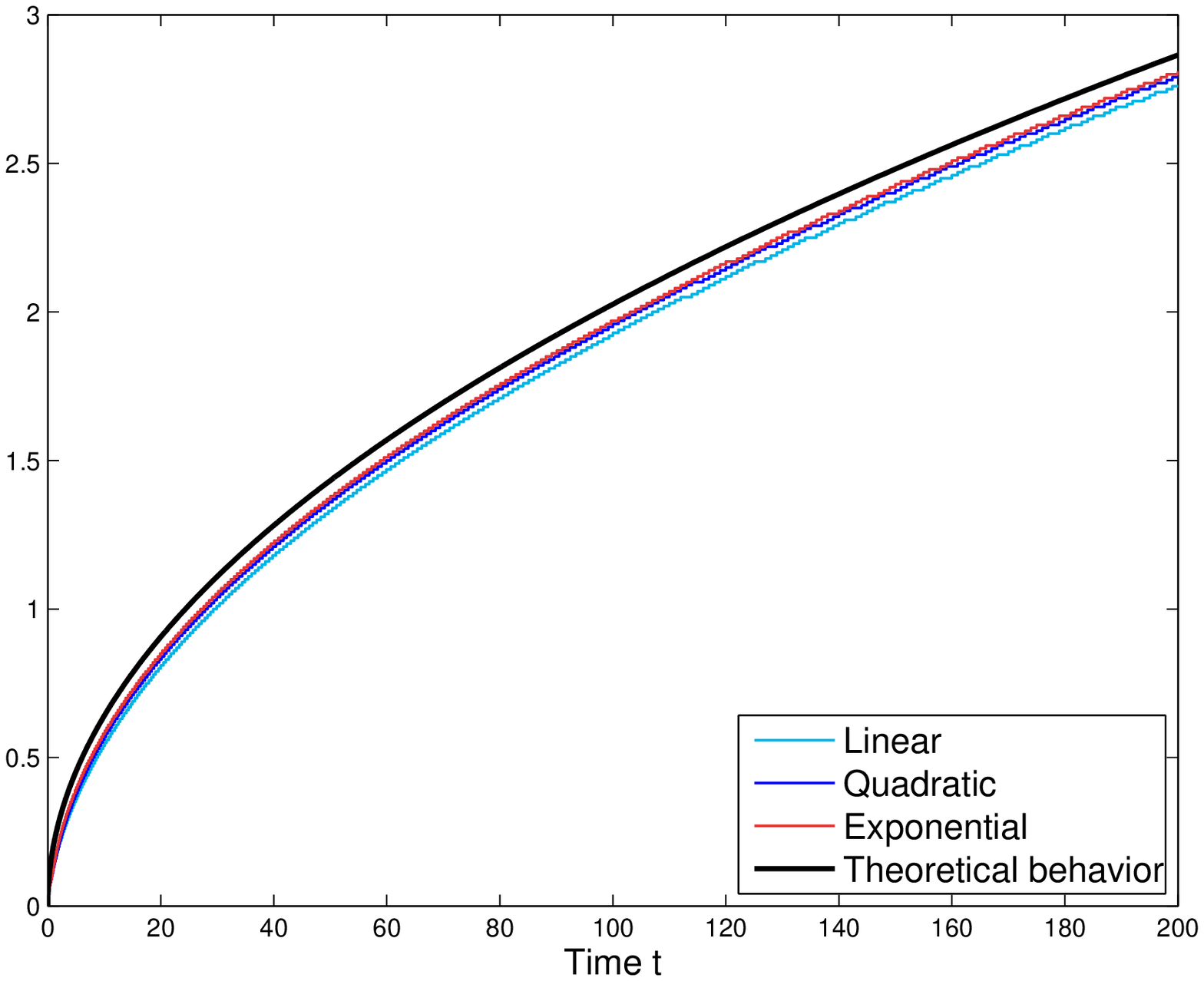}}
\subfigure[Relative error]{\includegraphics[width=0.45\textwidth]{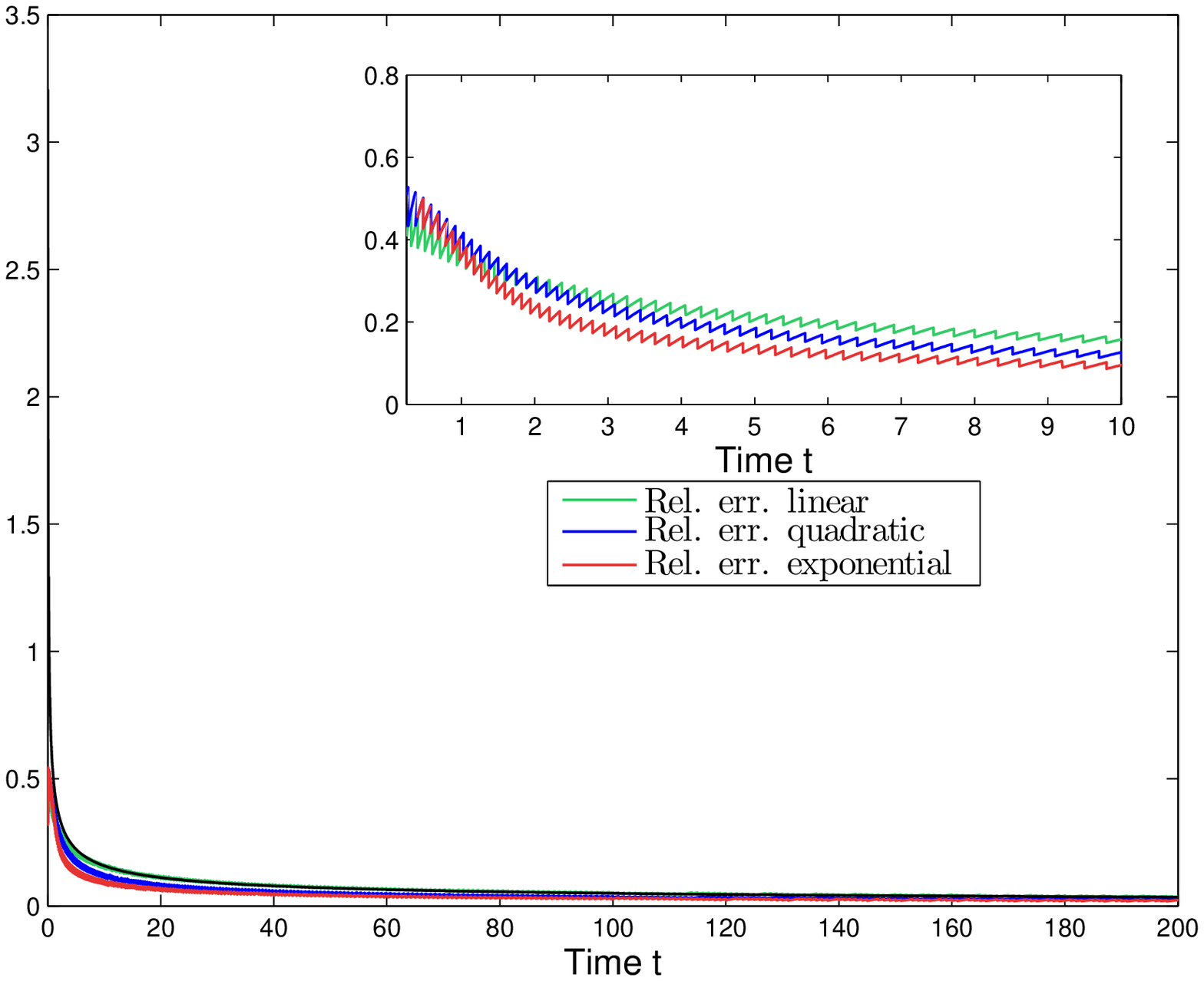}}
\caption{Simulations in the case of deterministic fluctuations \eqref{e:detfluc} with $\alpha^l \neq \alpha^r$.}\label{f:deterministic}
\end{center}
\end{figure}

\noindent Next we consider the case $\alpha^l = \alpha^r$, that is the number of buyers and vendors converge to the same long-time limit. Theorem \ref{t:thm2} states that the long time asymptotic behavior is driven by the first order moments of the initial data. We choose the following initial datum:
\begin{align*}
F_I(x) &=
\begin{cases}
40 &\text{ if } x < -2,\\
-20 x &\text{ if  } -2 \leq x \leq 3,\\
-60 & \text{ if } x > 3.
\end{cases}
\end{align*}
Furthermore we set 
\begin{align*}
b_l(t) = c_l(1-e^{-5t}) \quad \text{ and } \quad b_r(t) = c_r(1-e^{-5t})  
\end{align*}
with $c_l = \log(1.1) + \log(1.5)$ and $c_r = \log(1.1)$. Note that the functions $b_l$ and $b_r$ satisfy Assumption \ref{a:flucdec2} and that the parameter choices imply $\alpha^l = \frac{l^{\infty} M^l}{a} =  \frac{r^{\infty} M^r}{a} = \alpha^r$. The evolution of the price is depicted in Figure \ref{f:deterministic2}. We observe the expected long time behavior towards the theoretically predicted constant $\beta_2$, but the decay is different. It decreases like $\exp^{-5t}$, illustrated in Figure \ref{f:deterministic2}b), and not like $t^{-\frac{1}{2}}$ as in the case $\alpha^l \neq \alpha^r$.
\begin{figure}
\begin{center}
\subfigure[Evolution of the price]{\includegraphics[width=0.4\textwidth]{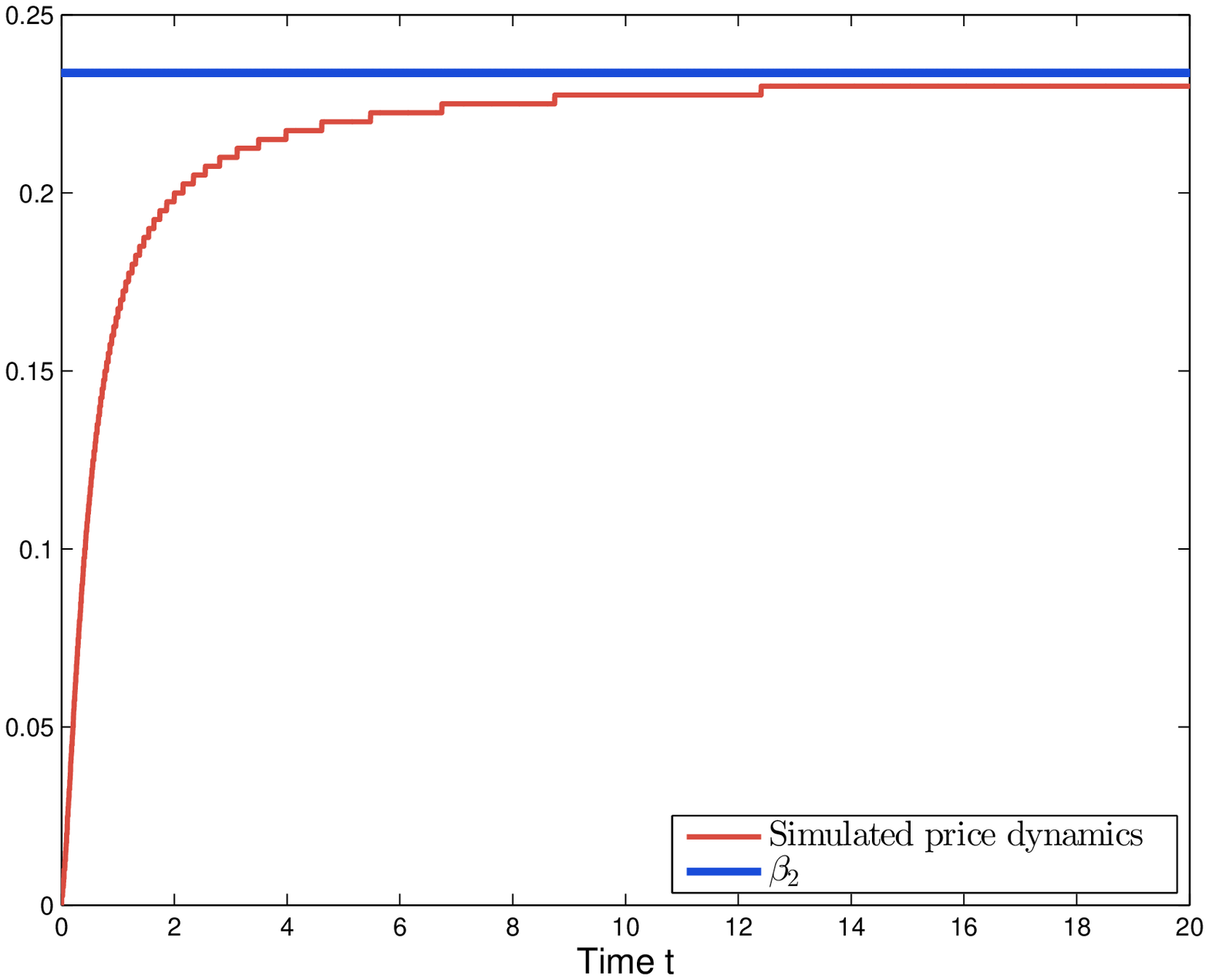}}\hspace*{0.5cm}
\subfigure[Relative error]{\includegraphics[width=0.4\textwidth]{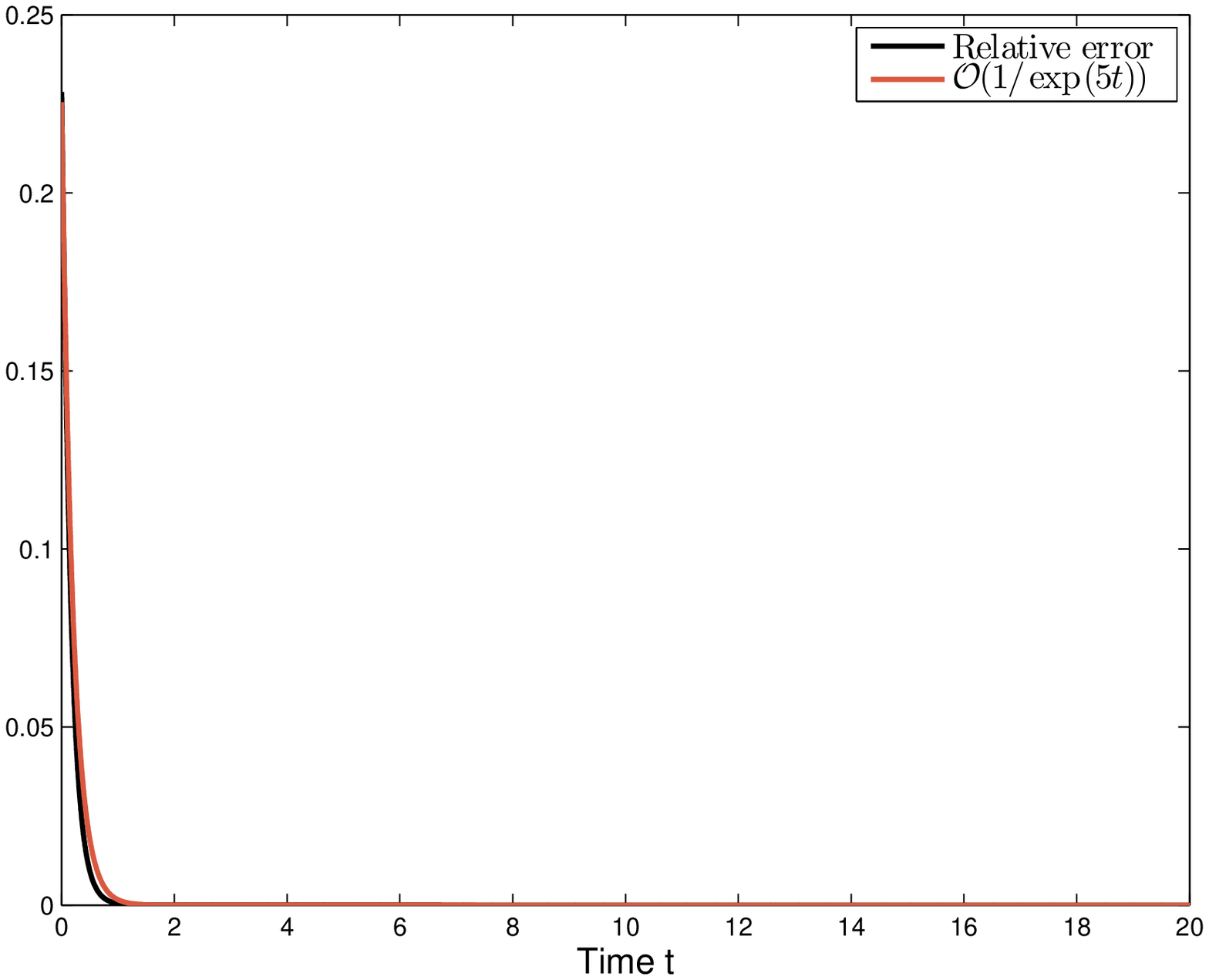}}
\caption{Simulations in the case of deterministic fluctuations  \eqref{e:detfluc} with $\alpha^l = \alpha^r$.}\label{f:deterministic2}
\end{center}
\end{figure}

\subsection{ Price evolution in the case of periodic market size fluctuations} Next we consider periodic fluctuations in the buyer and vendor market sizes. First
we choose fluctuations of the form 
\begin{align}\label{e:per1}
b_l(t) = \cos (2 \pi f_l t)~~ \text{ and }~~ b_r(t) = \cos (2 \pi f_r t).
\end{align}
Figure \ref{f:periodic} illustrates  the behavior of the free boundary on the time interval $t \in [0,1]$ for different values of $f_l$ and $f_r$, when the ratio $f_l\backslash f_r$ is rational. We observe that the maximum amplitude of the computed price from the theoretical predictions decreases as $\varepsilon \rightarrow 0$, an 
observation also confirmed in Table \ref{t:periodic}.\\

\begin{figure}[h!]
\centering
\includegraphics[width=0.45\textwidth]{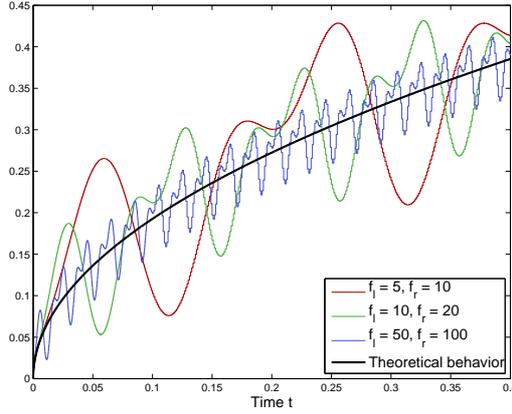}
\caption{Simulations in the case of periodic fluctuations \eqref{e:per1} with zero mean.}\label{f:periodic}
\end{figure}
\begin{table}[h!]
\centering
\begin{tabular}{|c|c|c|c|c|}
\hline
$f_l$/$f_r$ & 5/10 & 10/20 & 20/40 & 50/100 \\
\hline
amplitude & 0.1389 & 0.1018 & 0.0777 & 0.0538 \\
\hline
\end{tabular}
\caption{Maximum deviation of the free boundary from the theoretical prediction due to periodic mass fluctuations of the form \eqref{e:per1} in case of a rational fraction $\frac{f_l}{f_r}$. }\label{t:periodic}
\end{table}
Note that the functions $b_l = b_l(t)$ and $b_r = b_r(t)$ are one-periodic with zero mean in the first example and $b_l(0) = b_r(0) = 1$. To illustrate the impact of mass fluctuations with
non-zero mean we choose functions $b_l$ and $b_r$ of the form:
\begin{align}\label{e:per2}
b_l(t) = \cos (\pi t)^2~~ \text{ and }~~ b_r(t) = \cos (2 \pi t).
\end{align}
For this setting the corresponding evolution is illustrated in Figure \ref{f:periodic2} (using the same simulation parameters as in the previous example).
\begin{figure}[h!]
\begin{center}
\subfigure[Evolution of the price]{\includegraphics[width=0.45\textwidth]{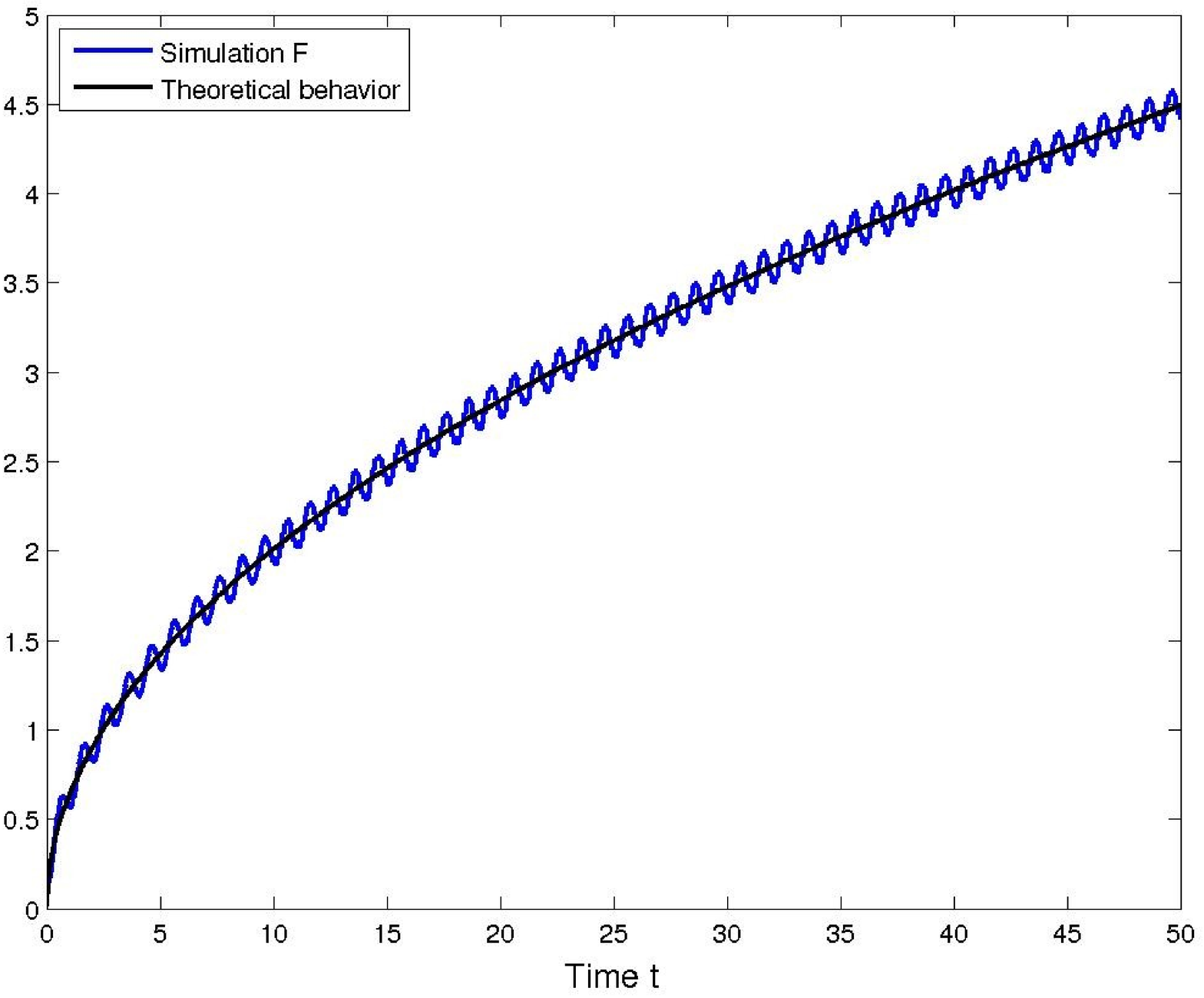}}
\subfigure[Relative error]{\includegraphics[width=0.45\textwidth]{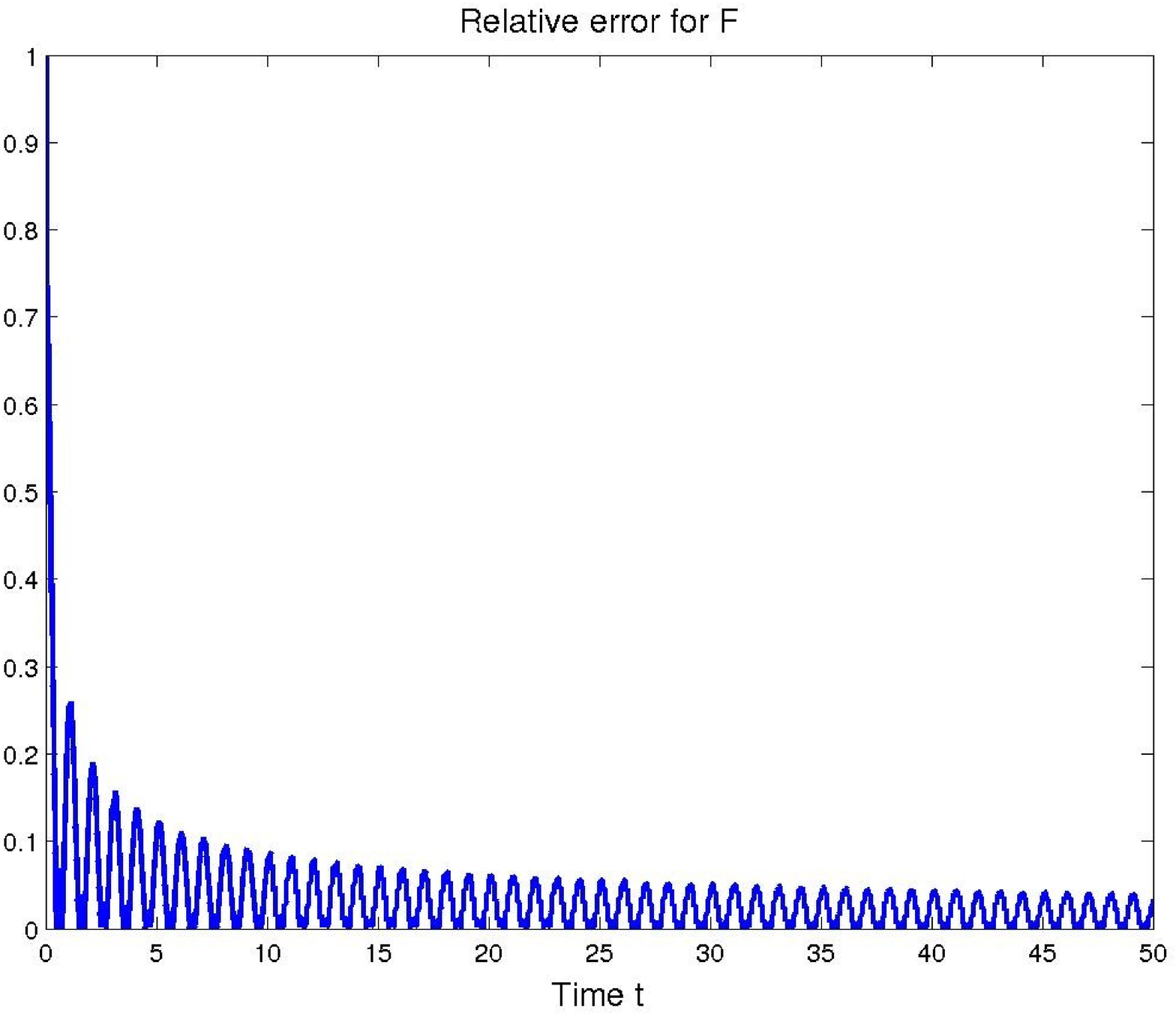}}
\caption{Simulations in the case of periodic fluctuations \eqref{e:per2} with non-zero mean.}\label{f:periodic2}
\end{center}
\end{figure}
Again we observe a very good agreement with the predicted theoretical price dynamics (based on the modified initial datum \eqref{e:VI}), even though they are based on
formal arguments.\\ 

\noindent Finally we study the effect of periodic mass fluctuations on the free boundary. We choose an initial datum with equal mass, that is $F_I(x) = \mathbb{1}_{x<0} - \mathbb{1}_{x>0}$ 
to clearly distinguish the influence of the market size fluctuations from the price dynamics due to the market imbalance of buyers and vendors. The functions $b_l$ and 
$b_r$ are set to
\begin{align}
b_l(t) = \cos (\pi f t)^2 ~~ \text{ and } ~~ b_r(t) = \sin(2 \pi f t).
\end{align}
 Table \ref{t:pricefluc} states the computed amplitude and frequency for the different values of $f$. Note that mass oscillations of order $\mathcal{O}(1)$ amplitude result in
$\mathcal{O}(\varepsilon)$-amplitude fluctuations in the free boundary and that we have 'numerical' strong convergence of the free boundary.
\begin{table}[h!]
\begin{center}
\begin{tabular}{|c | c | c | c | c |}
\hline
$f$ & 5 & 10 & 20 & 100 \\  
\hline
amplitude &  0.1415 & 0.10446 & 0.075766 & 0.03982  \\
frequency & 4.8832 & 9.766 & 20.142 & 99.9456\\
\hline
\end{tabular}
\caption{Frequency and amplitude of the price fluctuations for different values of $f$.}
\label{t:pricefluc}
\end{center}

\end{table}

\subsection{Almost periodic market size fluctuations}\label{s:almostpernum}

We would like to argue numerically that the formal asymptotic for the periodic case, i.e. that the limiting dynamics of the price correspond to the dynamics of the original Lasry-Lions model with a changed initial buyer-vendor distribution \eqref{e:VI}, holds also in the case of Besicovitch almost periodic market size fluctuations, cf. \cite{B55}. Note that the space of Besicovitch almost periodic
functions  corresponds to the closure of the trigonometric polynomials under the seminorm
\begin{align*}
\lVert f \rVert_{B,p} = \limsup_{x\rightarrow \infty} \left(\frac{1}{2x} \int_{-x}^x \lvert f(s)\rvert^p ds \right)^{\frac{1}{p}}.
\end{align*} 
These functions have an expansion of the form $\sum_n a_n e^{i \lambda_n t}$ with $\sum a_n^2$ finite and $\lambda_n \in \mathbb{R}$; their mean value is defined by
\begin{align}\label{e:genmean}
\langle f \rangle := \lim_{T\rightarrow \infty}\frac{1}{T} \int_0^T f(s) ds.
\end{align}
 We set
 \begin{align*}
b_l(t) =  \sin(2 \pi f_l t) + \sin(2 \pi f_l t \sqrt(2)) \text{ and } b_r(t) = \cos(2 \pi f_r t)
\end{align*}
with integer frequencies $f_l$ and $f_r$ and choose an initial datum of the form $F_I(x) = \mathbb{1}_{x < 0} - \frac{1}{2} \mathbb{1}_{x \geq 0}$.
Figure \ref{f:quasiperiod} illustrates the price dynamics for the time interval $t \in [0,1]$ and the frequencies $f_l = f_r = 1, 5, 10, 20, 40$. The black line 
corresponds to the theoretical prediction computed as in the periodic case, using the parameters $ \langle b_l \rangle = \langle b_r \rangle  = 0$, $b_l(0) = 0$ and $b_r(0) = 1$.
As already explained we observe convergence of the price towards the formal theoretical prediction in the periodic case, see \eqref{e:VI}. The numerical results indicate also the validity of the formal asymptotics in the almost periodic case.\\
\begin{figure}
\centering
\subfigure[Price dynamics]{\includegraphics[width = 0.4 \textwidth]{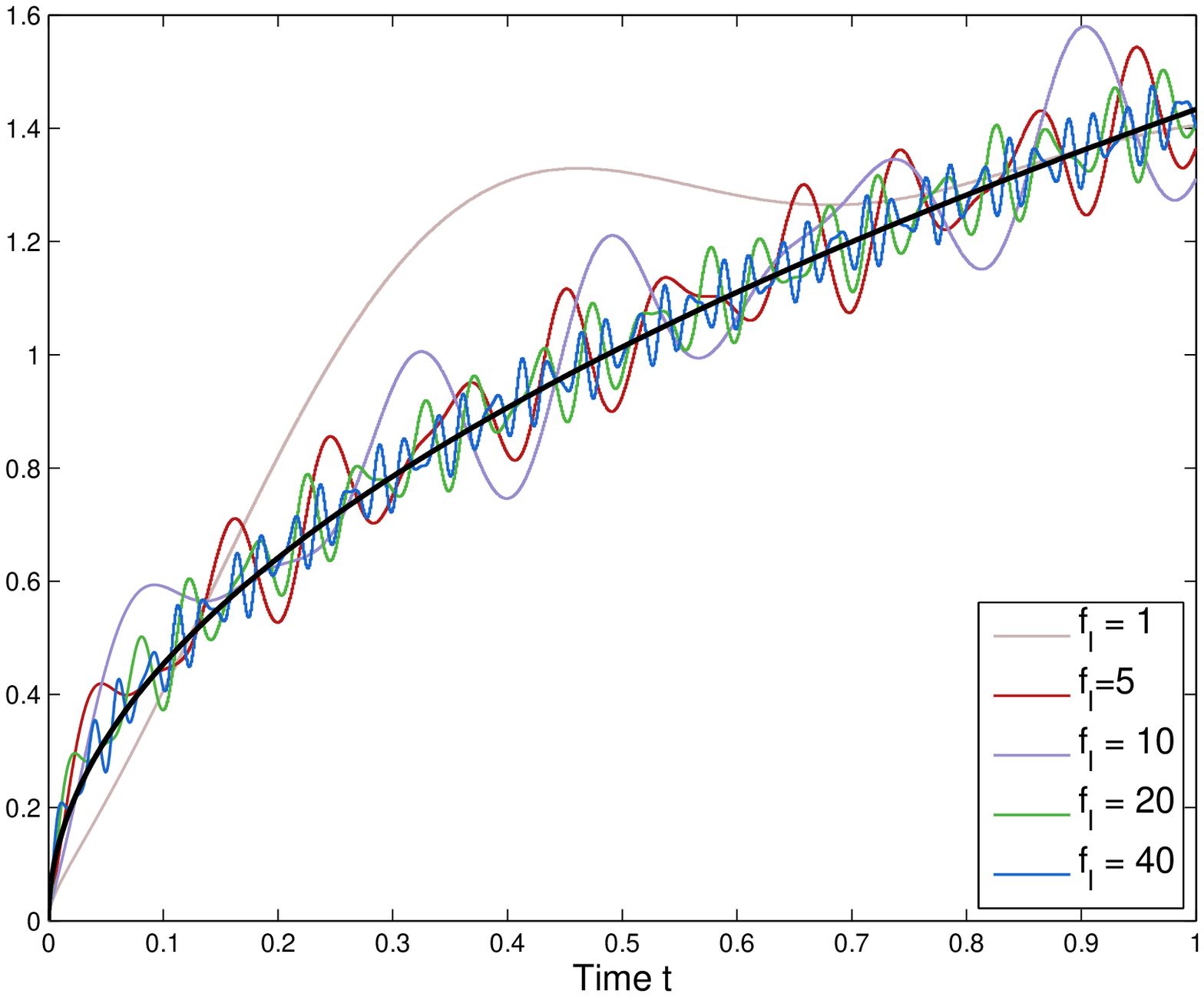}}\hspace*{0.4cm}
\subfigure[Approximation of the price dynamics for $f_l=f_r = 10$.]{\includegraphics[width = 0.4 \textwidth]{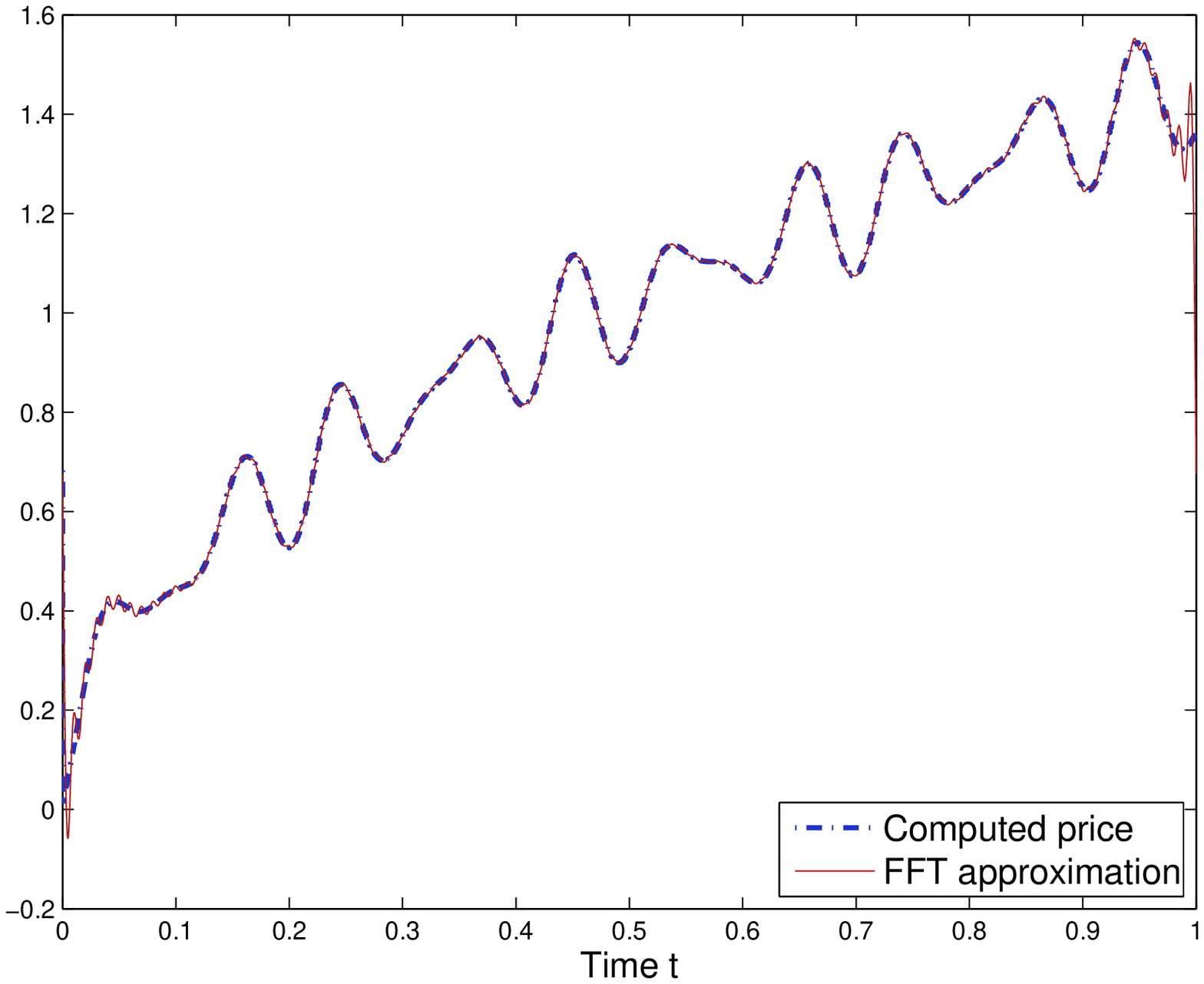}}
\caption{Price dynamics in case of almost-periodic market size fluctuations.}\label{f:quasiperiod}
\end{figure}
We would like to remark that it is difficult to determine if the discrete price dynamics are almost-periodic functions. Figure \ref{f:quasiperiod} shows the approximation
of the price in the case $f_l = f_r = 10$, which is based on the leading $100$ FFT coefficients of the discrete price. 

\subsection{Price formation with  stochastic market size fluctuations}\label{s:stochastic}

\noindent Finally we consider the Lasry and Lions model \eqref{e:lasrylions} with stochastic fluctuations in the numbers of buyers and vendors, which we model by two independent Brownian motions $B_l = B_l(t)$ and $B_r = B_r(t)$ replacing the deterministic functions $b_l = b_l(t)$ and $b_r = b_r(t)$ in Section \ref{s:deterministic}. Since under sufficient regularity conditions (such that Wong-Zakai type theorems for SPDEs hold, see ~\cite{nak:2004}) stochastic partial differential equations (SPDE) involving the Stratonovich integral are obtained in the limit (in probability) when Brownian motions are (reasonably) approximated by smooth functions. We shall use the  Stratonovich version of the Lasry and Lions model following the common convention that $ \circ B $ denotes Stratonovich integration against $B$. \\
\noindent Notice that the Stratonovich formulation is well defined when the Ito integral is well defined \emph{and} the integrand is a semimartingale. In our case this can be guaranteed due to the special structure of the SPDE's characteristics if sufficiently regular initial values are chosen. Also the Stratonovich correction term has a simple expression, since $ F \mapsto \lvert F \rvert $ and $F \mapsto F$ lead at least formally to the correction term $ 1/2 F $. In the sequel we do not enter too much into the stochastic details but assume enough regularity for the initial values such that our considerations hold true.\\

\noindent Here we discuss the well posedness of the equation, but leave a more refined analysis, that is of long term limits, for future research. As a matter of fact we show that the influence of multiplicative stochastic market size fluctuations on the price evolution is relatively weak.

\noindent Using the same transformation \eqref{e:trans} as in the beginning of Section \ref{s:deterministic} we obtain the stochastic equivalent of \eqref{e:lldet}, in particular a heat equation with a multiplicative Lipschitz non-linearity in the stochastic term:
\begin{align}\label{e:stochF1_stratonovich}
\begin{split}
d F (x,t) = F_{xx}(x,t) dt &+ \frac{1}{2}F(x,t) \circ (\sigma_l dB_l + \sigma_r dB_r)  {}\\
&+\frac{1}{2}\lvert F(x,t) \rvert \circ (\sigma_l dB_l - \sigma_r dB_r).
\end{split}
\end{align}
We apply the transformation
\begin{align}
  V(x,t) = F(x,t) e^{-\frac{\sigma_l B_l + \sigma_r B_r}{2}},
\end{align}
and obtain using the Stratonovich chain rule that
\begin{align}
  dV(x,t) = V_{xx}(x,t) + \frac{1}{2} \lvert V(x,t) \rvert \circ (\sigma_l dB_l - \sigma_r dB_r).
\end{align}
The corresponding Ito equation is given by
\begin{align}\label{e:stochF1_ito}
\begin{split}
d V (x,t) = V_{xx}(x,t) dt &+ \frac{\sigma_l^2+\sigma_r^2}{8} V(x,t) dt   {}\\
&+\frac{1}{2}\lvert V(x,t) \rvert (\sigma_l dB_l + \sigma_r dB_r).
\end{split}
\end{align}
Another exponential transformation
\begin{align*}
U(x,t) = V(x,t) e^{-\frac{(\sigma_l^2 + \sigma_r^2)}{8}t}
\end{align*}
yields, since $B_l(0) = B_r(0) = 0$,
\begin{subequations}\label{e:LLstoch_reform}
\begin{align}
&d U (x,t)=U_{xx}(x,t) dt + \frac{1}{2}\lvert U(x,t) \rvert (\sigma_l dB_l - \sigma_r dB_r) \, \text{ in } \mathbb{R} \times \mathbb{R}_+ ,\\
&U(x,t=0) = F_I(x).
\end{align}
\end{subequations}
This equation can be analyzed by the DaPrato-Zabczyk methodology (see \cite{dapzab:2014}), which roughly speaking says that on every Hilbert space $\mathcal{H}$ of real valued functions on $\mathbb R $ where 
\begin{enumerate}[label=(\arabic*)]
\item the map $F \rightarrow \lvert F \rvert$ is globally Lipschitz on $\mathcal{H}$, and
\item the one-dimensional Laplacian $ F \mapsto F_{xx} $ generates a strongly continuous semigroup $S$ (appropriately closed) on $\mathcal{H}$,
\end{enumerate}
there exists a unique mild solution of the SPDE. This process $ {(U(t))}_{t \geq 0} $, indexed in time $t$ and taking values in $\mathcal{H}$, satisfies, for every initial value $ F(0) \in \mathcal{H}$, the Duhamel formulation
\begin{align}\label{LL-stochastic}
\begin{split}
U(t)  = S_t U(0)  + \frac{1}{2} \int_0^t S_{t-s} \lvert U (s) \rvert (\sigma_l d B_l(s) - \sigma_r dB_r (s)). 
\end{split}
\end{align}
As usual we suppress the space (price) variable in this notation. The uniqueness of the solution of \eqref{e:LLstoch_reform} is understood in the Banach space of cadlag processes with  norm 
\[
U \mapsto \sqrt{E \big [\sup_{0 \leq s \leq T} {\| U(s) \|}^2 \big ]} \, .
\]
Note that the form is not restricted to independent Brownian motions. In fact \eqref{LL-stochastic} can be solved for any two dimensional semi-martingale driving process $ (S_l, S_r) $, in particular also for L\'evy processes; see Protter ~\cite{pro2005} and, Peszat and Zabczyk \cite{PZ2007} for more information.\\

 \noindent From \eqref{LL-stochastic} we conclude that $U$ is given by the Feyman-Kac formula
\begin{align*}
&U(x,t) = E \Big[ F_I(x+\sqrt{2} W_t) \times  \exp \big(\frac{1}{2} \int_0^t \operatorname{sign} (U(x,s)) (\sigma_l dB_l(s) - \sigma_2 dB_r(s))\big) \Big].
\end{align*}
This representation formula is justified by approximating the Brownian motions by piecewise constant interpolations and an application of the classical Feynman-Kac formula.
\noindent Differentiating \eqref{LL-stochastic} with respect to $x$, we find
\begin{align}
\begin{split}
U_x(t)  = S_t U_x(0) + \frac{1}{2} \int_0^t S_{t-s} \operatorname{sign} \left(U (s)\right) U_x (s) \left(\sigma_l d B_l(s) - \sigma_r d B_r (s)\right) - t (\sigma_l^2 + \sigma_r^2)/8  \, .
\end{split}
\end{align}
which then gives the  Feynman-Kac representation namely
\begin{align*}
  & U_x(x,t)  =  E \Big[ F_{I,x}(x+\sqrt{2} W_t) \times \\
  & \quad \times\exp \big( \frac{1}{2} \int_0^t (\operatorname{sign}(U(x+\sqrt{2}W_s,s)) \sigma_l d B_l(s) -\operatorname{sign}(U(x+\sqrt{2}W_s,s)) \sigma_r d B_r(s)) - t(\sigma_l^2+\sigma_r^2)/8 \big) \Big] \, 
\end{align*}

\noindent It is an easy consequence of this representation that for monotonically decreasing non-constant initial data $F_{I}$ positivity of $U_x (t)$ holds instantaneously for $t > 0$.
 This implies that, for such initial data, there exists at most one zero $x = p(t)$ of $U(t)$ for every $t>0$, that is the free boundary cannot turn back or develop fat parts
in finite time. By smoothing the Brownian processes $B_l$ and $B_r$ in \eqref{e:LLstoch_reform} we conclude for general initial data $F_I$ that the zero level set of the solution $U$ is the graph limit 
of a sequence of functions of time. This excludes in particular that the free boundary can turn back but it does not exclude the formation of fat parts. \\

\noindent Although the main focus of this paper was the long time behavior of the free boundary we conclude with a short heuristic calculation which illustrates why we cannot expect very volatile dynamics in the case of Brownian drivers: we consider an (exponential) Euler step for a short period of time denoted by $ t >0 $. Let $S$ denote the heat semigroup and $ E_{0,t} $ the random operator defined for $ x \leq p(t) $ by 
\begin{subequations}\label{e:randop}
\begin{align}
E_{0,t} U(x) := U(x) \exp \big ( \sigma_l B_l(t) - \frac{\sigma_l^2t}{2} \big),
\end{align}
 and for $ x \geq p(t) $ by
\begin{align}
E_{0,t} U(x) := U(x) \exp \big ( \sigma_r B_r(t) - \frac{\sigma_r^2t}{2} \big).
\end{align}
\end{subequations}
 We  evaluate the concatenation of these operators for a $C^1$ function $ U $ with precisely one zero at $p$ with $ U'(p) < 0 $ for small times $t$ and for $x$ close to $p$. We have
\begin{align*}
S_t E_{0,t} U(x) = \int_{-\infty}^{\infty} \phi(y) \big( &1_{\{x+\sqrt{2t}y \leq p\}} \kappa_l U'(p)(x+\sqrt{2t} y -p) + \\
 +  &1_{\{x+\sqrt{2t}y \geq p\}} \kappa_r U'(p)(x+\sqrt{2t}y-p) \big) dy \, ,
\end{align*}
where $ \phi $ is the standard normal density and
\[
\kappa_{l/r}=\kappa_{l/r}(t) = \exp \big (\sigma_{l/r}  B_{l/r}(t) - \frac{\sigma_{l/r}^2 t}{2} \big) \, .
\]
For small times $t$ we obtain
\begin{align*}
S_t E_{0,t} U(x) & = U'(p) \kappa_l (x-p) \Phi(x - p) + U'(p) \kappa_r (x-p) (1-\Phi(x - p)) \\
& + U'(p) (-\kappa_l+\kappa_r) \sqrt{2t} \phi(x-p) \, ,
\end{align*}
which implies that for small times $p(t)$ is of the form:
\begin{align}\label{e:shortasymptotic}
p(t) \approx p+\frac{2 (\kappa_r-\kappa_l)}{\sqrt{\pi}(\kappa_r+\kappa_l)}\sqrt{t} +\mathcal{O}(t) \, .
\end{align}
This formula implies that we do actually observe $\sqrt{t} $ asymptotics weighted with factors stemming from $\kappa_r-\kappa_l$ for small times as well. The asymptotics are also of order of magnitude $ \sqrt{t} $ due to the Brownian increments in the exponent. This implies that the increments of the price process $p(t) - p(0) $  are $ \mathcal{O}(t) $ even in the presence of Brownian drivers, and therefore of finite total variation for $ t \mapsto p(t)$. Hence the price process does not admit a martingale part, which implies that even proportional fluctuations of buyers and vendors densities by independent Brownian motions  do not lead to price curves as observed in realistic markets.\\
On the other hand the short time asymptotics give a hint how to introduce ``more'' randomness to obtain a martingale price process $ p $. This is that the densities of the buyer and vendor must be perturbed on small intervals of length $t$ by multiplicative noises of $ \mathcal{O}(1) $. This, however, does not lead to the type of SPDE \eqref{LL-stochastic} originally proposed in this section.

\paragraph*{Numerical simulations}
In the first example we would like to confirm the small time asymptotic behavior of the free boundary $p(t)$ given by \eqref{e:shortasymptotic}. We start with an initial datum of the form
\begin{align*}
F_I(x) = -x, 
\end{align*}
and set $\sigma_r = \sigma_l = 1$. We use the domain $\Omega$ to $[-10,10]$ and run the simulation on the time interval $t \in [0,1]$. The spatial
discretization corresponds to equidistant intervals of size $h = 10^{-3}$, while the time steps are set to $\Delta t = 10^{-5}$. At time $t=0.5$
we choose two independent normally distributed random numbers 
$\Delta B^1 $, $ \Delta B^2$ with standard
deviation $ \sqrt{\Delta t} $ and multiply the distribution left of the price with $ \exp(\Delta B^1 - \Delta t/2) $ and the right of
the price with  $ \exp(\Delta B^2 - \Delta t/2) $. Figure \ref{f:smalltime} illustrates the small time asymptotics - the initial
price up to time $t=0.5$ equals to zero due to the same total number of buyers and vendors. Then the stochastic fluctuations 
in the buyer and vendor density initiate a new price dynamic, which confirms the estimated behavior
given by \eqref{e:shortasymptotic}.
\begin{figure}
\begin{center}
\includegraphics[width=0.5\textwidth]{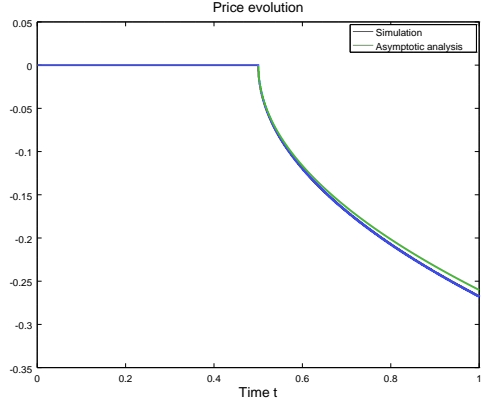}
\caption{Short-time asymptotics in case of stochastic market size fluctuations}\label{f:smalltime}
\end{center}
\end{figure}
\noindent We conclude with an example illustrating the necessity to add ``more'' randomness to the model. We start with an initial datum of the form $F_I(x) = \mathbb{1}_{x <0} - \mathbb{1}_{x> 0}$ and
time steps of $\Delta t=10^{-5}$. At time $t = 0.25,~ 0.5$ and $t=0.75$ we multiply the right and left distribution by two independent normally distributed random numbers 
$\Delta B^1$ and $\Delta B^2$ with standard deviation $\sqrt{\Delta t_{slow}}$, where $\Delta t_{slow}$ denotes the slow time scale, that is $\Delta t_{slow} = 0.25$.
The corresponding price dynamics are depicted in Figure \ref{f:slowscale}, which depicts a sequence of square-root like curves. Each one evolves according to the estimated behavior discussed in this  subsection.

\begin{figure}
\begin{center}
\includegraphics[width=0.5\textwidth]{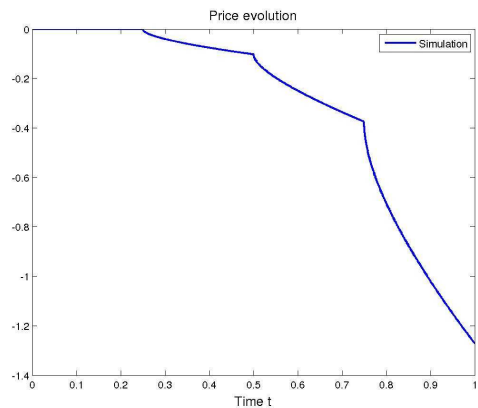}
\caption{Price evolution under 'slow scale' market size fluctuations}\label{f:slowscale}
\end{center}
\end{figure}

\section*{Acknowledgment}
\noindent We are grateful to Panagiotis Souganidis for generously sharing his ideas with us during the course of this research and for helpinng us 
substantially in improving the presentation.\\

\noindent MTW acknowledges financial support from the Austrian Academy of Sciences \"OAW via the New Frontiers Group NFG-001.
PES was partially supported by the NSF.\\

\bibliographystyle{plain}
\bibliography{priceformation}

\end{document}